\documentclass[12pt]{article}
\usepackage{graphicx,subfigure}
\usepackage{amsmath,amsthm,amssymb,enumerate}
\usepackage{euscript,mathrsfs}
\usepackage{color}
\usepackage{dsfont}
\usepackage{url}
\usepackage{bm}
\usepackage[left=2cm,right=2cm,top=3.5cm,bottom=3.5cm]{geometry}
\usepackage{color}
\usepackage[framemethod=tikz]{mdframed}
\allowdisplaybreaks

\usepackage{esint}
\usepackage{soul}

\catcode`\@=11 \@addtoreset{equation}{section}

\catcode`\@=12

\newtheorem{Theorem}{Theorem}[section]
\newtheorem{Proposition}[Theorem]{Proposition}
\newtheorem{Lemma}[Theorem]{Lemma}
\newtheorem{Corollary}[Theorem]{Corollary}

\theoremstyle{definition}
\newtheorem{Definition}[Theorem]{Definition}

\newtheorem{Remark}[Theorem]{Remark}

\newcommand{\bTheorem}[1]{
	\begin{Theorem} \label{T#1} }
	\newcommand{\eT}{\end{Theorem}}

\newcommand{\bProposition}[1]{
	\begin{Proposition} \label{P#1}}
	\newcommand{\eP}{\end{Proposition}}

\newcommand{\bLemma}[1]{
	\begin{Lemma} \label{L#1} }
	\newcommand{\eL}{\end{Lemma}}

\newcommand{\bCorollary}[1]{
	\begin{Corollary} \label{C#1} }
	\newcommand{\eC}{\end{Corollary}}

\newcommand{\bRemark}[1]{
	\begin{Remark} \label{R#1} }
	\newcommand{\eR}{\end{Remark}}

\newcommand{\bDefinition}[1]{
	\begin{Definition} \label{D#1} }
	\newcommand{\eD}{\end{Definition}}

\newcommand{\tvS}{\widetilde{S}}

\newcommand{\TD}{\mathbb{T}^2}

\newcommand{\avintO}[1]{\fint_{\Omega} #1 \dx}

\newcommand{\intTS}[1]{\int_{\mathbb{R} \times \mathbb{T}^1} #1 \ \dx}

\newcommand{\Td}{\mathbb{T}^d}

\newcommand{\intTd}[1]{\int_{\mathbb{T}^d} #1 \ \dx}

\newcommand{\vme}{\vm_\ep}

\newcommand{\tvm}{\widetilde{\vc{m}}}
\renewcommand{\tvm}{\widetilde{\bm{m}}}

\newcommand{\bfphi}{\boldsymbol{\varphi}}

\newcommand{\bFormula}[1]{
	\begin{equation} \label{#1}}
	\newcommand{\eF}{\end{equation}}

\newcommand{\Ov}[1]{\overline{#1}}

\newcommand{\toW}{\stackrel{\mathcal{W}}{\to}}

\newcommand{\vr}{\varrho}
\newcommand{\vre}{\vr_\ep}

\newcommand{\tvr}{\wtilde \vr}

\newcommand{\vt}{\vartheta}
\newcommand{\vu}{\bm{u}}
\newcommand{\vm}{\bm{m}}

\newcommand{\vc}[1]{{\bf #1}}

\newcommand{\Div}{{\rm div}_x}
\newcommand{\Grad}{\nabla_x}

\newcommand{\dx}{\,{\rm d} {x}}

\newcommand{\dt}{\,{\rm d} t }

\newcommand{\vU}{\bm{U}}

\newcommand{\intO}[1]{\int_{\Omega} #1 \ \dx}

\newcommand{\D}{{\rm d}}

\newcommand{\ep}{\varepsilon}

\newcommand{\R}{\mathbb{R}}

\newcommand{\br}{ \nonumber \\ }

\def\softd{{\leavevmode\setbox1=\hbox{d}%
		\hbox to 1.05\wd1{d\kern-0.4ex{\char039}\hss}}}
\definecolor{Cgrey}{rgb}{0.85,0.85,0.85}
\definecolor{Cblue}{rgb}{0.50,0.85,0.85}
\definecolor{Cred}{rgb}{1,0,0}
\definecolor{fancy}{rgb}{0.10,0.85,0.10}
\definecolor{amaranth}{rgb}{0.9, 0.17, 0.31}

\newcommand\Cbox[2]{%
	\newbox\contentbox%
	\newbox\bkgdbox%
	\setbox\contentbox\hbox to \hsize{%
		\vtop{
			\kern\columnsep
			\hbox to \hsize{%
				\kern\columnsep%
				\advance\hsize by -2\columnsep%
				\setlength{\textwidth}{\hsize}%
				\vbox{
					\parskip=\baselineskip
					\parindent=0bp
					#2
				}%
				\kern\columnsep%
			}%
			\kern\columnsep%
		}%
	}%
	\setbox\bkgdbox\vbox{
		\color{#1}
		\hrule width  \wd\contentbox %
		height \ht\contentbox %
		depth  \dp\contentbox
		\color{black}
	}%
	\wd\bkgdbox=0bp%
	\vbox{\hbox to \hsize{\box\bkgdbox\box\contentbox}}%
	\vskip\baselineskip%
}

\mdfdefinestyle{MyFrame}{%
	linecolor=black,
	outerlinewidth=1pt,
	roundcorner=5pt,
	innertopmargin=\baselineskip,
	innerbottommargin=\baselineskip,
	innerrightmargin=10pt,
	innerleftmargin=10pt,
	backgroundcolor=white!20!white}

\newcommand{\wtilde}{\widetilde}

\allowdisplaybreaks

\begin{document}


\title{\bf The Euler system of gas dynamics}

\author{Eduard Feireisl
	\thanks{The work of E.F. was partially supported by the
		Czech Sciences Foundation (GA\v CR), Grant Agreement
		24--11034S. The Institute of Mathematics of the Academy of Sciences of
		the Czech Republic is supported by RVO:67985840.
		E.F. is a member of the Ne\v cas Center for Mathematical Modelling and Mercator Fellow in SPP 2410 ``Hyperbolic Balance Laws: Complexity, Scales and Randomness".}
}

\date{}

\maketitle

\centerline{$^*$Institute of Mathematics of the Academy of Sciences of the Czech Republic}
\centerline{\v Zitn\' a 25, CZ-115 67 Praha 1, Czech Republic}
\centerline{feireisl@math.cas.cz}

\begin{abstract}
	
This is a survey highlighting several recent results concerning well/ill posedness of the Euler system of gas dynamics. 
Solutions of the system are identified as limits of consistent approximations generated either by physically more complex 
problems, notably the Navier-Stokes-Fourier system, or by the approximate schemes in numerical experiments. The role 
of the fundamental principles encoded in the First and Second law of thermodynamics in identifying a unique physically admissible 
solution is examined. 
 	
\end{abstract}


{\small

\noindent
{\bf 2020 Mathematics Subject Classification:} 35-02, 35Q31, 35Q35

\medbreak
\noindent {\bf Keywords:} Euler system of gas dynamics, consistent approximation, First law of thermodynamics, Second law of thermodynamics

\tableofcontents

}

\section{Introduction}
\label{i}

The \emph{Euler system of gas dynamics} is a mathematical model describing the time evolution of an ideal gas in the framework of continuum 
mechanics. From the mathematics viewpoint, the problem represents an iconic example of a system of non--linear conservation laws, see 
Part IV of the monograph by Benzoni-Gavage and Serre \cite{BenSer} or Dafermos \cite{D4a}. The system consists of three field equations 
reflecting the underlying physical principles of 
 \begin{align}
\bullet \ \mbox{\bf mass conservation:} \hskip 7.8 cm	\partial_t \vr + \Div \vm &= 0, \label{i1}\\
\bullet \ \mbox{\bf momentum balance:} \hskip 4.3 cm	\partial_t \vm + \Div \left( \frac{\vm \otimes \vm}{\vr} \right) +
	\Grad p &= 0, \label{i2} \\
\bullet \ \mbox{\bf energy balance:} \hskip 6.2 cm	\partial_t E + \Div \left[ \Big( E + p \Big) \frac{\vm}{\vr} \right] &= 0,
	\label{i3}	
\end{align}
where $\vr = \vr(t,x)$ is the \emph{mass density}, $\vm = \vm(t,x) = \vr \vu (t,x)$ the \emph{momentum} with the corresponding bulk velocity $\vu$,
$p$ the pressure, and $E$ the \emph{total energy}. For the sake of simplicity, we suppose the gas occupies a bounded domain $\Omega \subset \R^d$, $d=1,2,3$, 
with an impermeable rigid boundary, 
\begin{equation} \label{i4}
	\vm \cdot \bm{n}|_{\partial \Omega} = 0,\ \bm{n} \ \mbox{the outer normal vector to}\ \partial \Omega.
\end{equation}
In this study, we focus mostly on the physically relevant multi--dimensional setting $d = 3$.

\subsection{The Second law of thermodynamics, entropy}
\label{se}

The Euler system \eqref{i1}--\eqref{i3} supplemented with the boundary condition \eqref{i4} is \emph{thermodynamically closed}, meaning 
neither mass nor energy are exchanged with the ``outer world''. The asymptotic behaviour of closed systems 
is characterized in the most flagrant way by the celebrated statement of Clausius: 
\begin{quotation}
	
	\emph{
		The energy of the world is constant; its entropy tends to a maximum.}
	
\end{quotation}

While the time evolution of the energy is governed by equation \eqref{i3}, the entropy can be identified only through a suitable 
\emph{equation of state} relating the pressure $p$ and the \emph{internal energy} 
\[
\vr e = E - \frac{1}{2} \frac{|\vm|^2}{\vr}
\]  
to the density $\vr$ and the \emph{absolute temperature} $\vt$. The existence of \emph{entropy} $s$ follows by imposing 
Gibbs' law
\begin{equation} \label{i5} 
\vt D s = D e + p D \left( \frac{1}{\vr} \right). 
\end{equation}
Formally, meaning under the assumption of smoothness of all quantities in question, the entropy $s$ satisfies a transport equation 
\begin{equation} \label{i6}
\partial_t (\vr s) + \Div (s \vm) = 0 \ \mbox{or}\ \partial_t s + \vu \cdot \Grad s = 0.
\end{equation}
In particular, the entropy remains constant if constant initially, which gives rise to a reduced \emph{isentropic} Euler system. Here, we focus 
on the general case with non-constant entropy.

Note that \eqref{i6} is clearly at odds with Clausius' statement quoted above as the entropy obeying 
\eqref{i6} is a \emph{conserved quantity}. In the context of gas dynamics, however, solutions of the Euler system 
are known to develop singularities - the so--called shock waves - in a finite time for a fairly general class of the initial states. 
Accordingly, \emph{global-in-time} solutions, as long as they exist, satisfy the Euler system in the weak (distributional) sense. The entropy 
equation \eqref{i6} is relaxed to the inequality
\[
\partial_t (\vr s) + \Div (s \vm) \geq 0,  
\]
meaning the entropy is produced in accordance with the Second law of thermodynamics. 
This can be viewed as a ghost effect of dissipative terms in the vanishing viscosity limit, where the Euler system is understood 
as an asymptotic state of the Navier--Stokes--Fourier system with vanishing dissipation.

Introducing 
the \emph{total entropy} $S = \vr s$ we write the entropy inequality in the form of
\begin{equation} \label{i7}
	\partial_t S + \Div \left( S \frac{\vm}{\vr} \right)  \geq 0
\end{equation}	
usually interpreted as an \emph{admissibility criterion} in the class of weak solutions of the Euler system. 
Moreover, by means of Schwartz representation theorem, we may interpret \eqref{i7} as 
\begin{equation} \label{i8}
	\bullet \ \mbox{\bf entropy inequality:} \hskip 6 cm	\partial_t S + \Div \left( S \frac{\vm}{\vr} \right)= \sigma,
\end{equation}	 
with the \emph{entropy production rate} $\sigma$ -- a non--negative measure. 

We consider a \emph{polytropic equation of state}:
\begin{equation} \label{i9}
	p = (\gamma - 1) \vr e, \ \mbox{with the adiabatic exponent}\ \gamma > 1.
\end{equation}
Apparently, 
the equation of state \eqref{i9} is \emph{incomplete}, in particular, the temperature $\vt$ is not uniquely determined. Indeed Gibbs' equation 
\eqref{i5}, together with \eqref{i9}, yield 
\begin{equation} \label{i10} 
	p(\vr, \vt) = \vt^{\frac{\gamma}{\gamma - 1}} P \left( \frac{\vr}{ \vt^{\frac{1}{\gamma - 1}}} \right), 
\end{equation}	
and, 
\begin{align} 
	e(\vr, \vt) &= \frac{\vt}{\gamma - 1}\frac{\vt^{\frac{1}{\gamma - 1}}}{\vr} P \left( \frac{\vr}{ \vt^{\frac{1}{\gamma - 1}}} \right), \br 
	s(\vr, \vt) &= S \left( \frac{\vr}{ \vt^{\frac{1}{\gamma - 1}}} \right),\ S'(Z) = - \frac{1}{\gamma - 1} \frac{\gamma P(Z) - P'(Z) Z }{Z^2}, 
	\label{i11}
\end{align}
for an arbitrary function $P$. Thus the absolute temperature $\vt$ is determined by $\vr$ and $e$ modulo the function $P$, see Cowperthwaite \cite{Cowp}, M\" uller and Ruggeri \cite{MURU}, or \cite[Chapters 2,3]{FeNo6A}. To fix this problem, we consider $P(Z) = Z$ obtaining the standard 
\emph{Boyle--Mariotte law} 
\begin{equation} \label{i12}
p(\vr, \vt) = \vr \vt,\ e = c_v \vt,\ c_v = \frac{1}{\gamma - 1},\ s(\vr, \vt) = c_v \log (\vt) - \log(\vr).
\end{equation}

\subsection{State--of--the art}

Let us briefly summarize the available results concerning well--posedness of the Euler system. 

\subsubsection{Strong/classical solutions}

As is well known, the Euler system is well posed, locally in time, for sufficiently regular initial data. The relevant result 
was obtained by Schochet \cite[Theorem 1]{SCHO1}.

\begin{Theorem}[{\bf Short time existence}] \label{Ti1}
	Suppose that
	\begin{itemize}
		\item the thermodynamic functions $p = p(\vr, \vt)$, $e = e(\vr, \vt)$, $s = s(\vr, \vt)$ are three times continuously
		differentiable for $\vr > 0$, $\vt > 0$, satisfy Gibbs' law \eqref{i5}, and 
		\[
		\frac{\partial p(\vr, \vt) }{\partial \vr} > 0,\ \frac{\partial e(\vr, \vt) }{\partial \vt} > 0 \ \mbox{for all}\ \vr > 0, \vt > 0.
		\]
		\item
		$\Omega \subset \R^3$ is a bounded domain with $C^\infty-$boundary;
		\item the initial data belong to the class
		\[
		\vr_0, \ \vt_0 \in W^{3,2}(\Omega),\ \vu_0 \in W^{3,2}(\Omega; \R^3),\ \vr_0 > 0,\ \vt_0 > 0 \ \mbox{in}\ \Ov{\Omega};
		\]
		\item the compatibility conditions
		\[
		\partial^k_t \vu_0 \cdot \vc{n}|_{\partial \Omega} = 0
		\]
		hold for $k=0,1,2$.
		
	\end{itemize}
	
	Then there exists $T > 0$ such that
	the Euler system \eqref{i1} -- \eqref{i3}, with the boundary condition \eqref{i4}, and the initial data
	\[
	\vr(0, \cdot) = \vr_0,\ \vu(0, \cdot) = \vu_0,\ \vt(0, \cdot) = \vt_0,
	\]
	admits a classical solution in $(0,T) \times \Omega$,
	\[
	\vr(t, \cdot), \ \vt(t, \cdot) > 0 \ \mbox{in}\ \Ov{\Omega} \ \mbox{for}\ t \in [0,T).
	\]
	
\end{Theorem}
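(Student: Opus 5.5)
The plan is to follow Schochet's strategy for symmetrizable hyperbolic systems on domains with characteristic boundary. First, for smooth solutions with $\vr, \vt > 0$, I rewrite \eqref{i1}--\eqref{i3} in the primitive variables $\vU = (p, \vu, s)$ (equivalently $(\vr,\vu,\vt)$). Using Gibbs' law \eqref{i5} and the hypotheses $\partial_\vr p > 0$, $\partial_\vt e > 0$, the system becomes a quasilinear system
\[
A_0(\vU) \partial_t \vU + \sum_{j=1}^3 A_j(\vU) \partial_{x_j} \vU = 0,
\]
with $A_0$ symmetric positive definite and $A_j$ symmetric. The entropy equation \eqref{i6} serves as a transport equation for $s$, and the energy equation can be dropped. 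The hypotheses make the coefficients $C^2$, which is the regularity needed for $W^{3,2}$ estimates.

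The boundary condition \eqref{i4} then reads $\vu\cdot\vc{n} = 0$. The boundary matrix $A_n = \sum_j n_j A_j$ has a kernel of constant dimension on $\partial\Omega$, since the $s$ and tangential velocity components are transported. Thus the boundary is \emph{uniformly characteristic}, and the condition is maximally nonnegative: $\langle A_n \vU, \vU\rangle = 2 p\, \vu\cdot\vc{n} = 0$. The $L^2$ energy estimate follows directly. Tangential derivatives (time derivatives and derivatives along vector fields tangent to $\partial\Omega$) commute with the boundary condition up to lower order terms, so the same argument controls them in $L^2$. The time derivatives $\partial_t^k\vU(0)$, $k \le 2$, are computed from the equation, and the compatibility conditions make these data consistent with the boundary condition.

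The main obstacle is normal derivatives, since the boundary is characteristic and one cannot solve the equation for $\partial_n \vU$ in full. I would follow Schochet's approach (cf.\ Rauch, Secchi). The noncharacteristic components, namely $\partial_n p$ and $\partial_n(\vu\cdot\vc{n})$, are recovered from the equations, which express them through tangential and time derivatives. The remaining components are the vorticity $\mathrm{curl}\,\vu$ and $\Grad s$. They satisfy transport equations along $\vu$, which is tangent to $\partial\Omega$, so they need no boundary data and are estimated by standard transport estimates. Combining these yields a closed a priori bound for
\[
\sum_{k\le 3}\|\partial_t^k \vU(t)\|_{W^{3-k,2}}
\]
on a short interval depending only on the data norm and on $\min\vr_0$, $\min\vt_0$.

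Existence then follows by iteration: solve linearized symmetric hyperbolic problems with frozen coefficients using the linear theory for maximally dissipative characteristic boundaries, and pass to the limit. The a priori bounds give uniform bounds, contraction holds in the $L^2$ norm, and interpolation upgrades convergence. The $W^{3,2}$ regularity in three dimensions embeds in $C^1$, so the solution is classical. Positivity of $\vr$, $\vt$ persists for small $T$ by continuity in time in $C(\Ov\Omega)$. Finally, the energy equation \eqref{i3} is recovered from the primitive system for smooth solutions.
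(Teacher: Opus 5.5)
The paper gives no proof of this statement; it quotes Schochet's theorem. Your outline is the strategy of that proof: symmetrization in $(p,\vu,s)$, treating $\vu\cdot\vc{n}=0$ as a maximally nonnegative condition on a boundary characteristic of constant multiplicity, energy estimates for time and tangential derivatives, recovery of $\partial_n p$ and $\partial_n(\vu\cdot\vc{n})$ from the equations and of the characteristic components from the transport of ${\rm curl}\,\vu$ and $\Grad s$, and iteration with $L^2$ contraction.

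One step, however, is false as you state it: that $C^2$ coefficients are the regularity needed for $W^{3,2}$ estimates. Applying $\partial_t^3$ to $A_0(\vU)\partial_t\vU+\sum_j A_j(\vU)\partial_{x_j}\vU=0$ produces $\partial_t^3[A_j(\vU)]\,\partial_{x_j}\vU$. This contains $A_j'''(\vU)(\partial_t\vU)^3\,\partial_{x_j}\vU$. For spatial derivatives, the commutator estimates likewise require $A_j(\vU)\in W^{3,2}$. Both need a third derivative of $A_j$. Under the stated hypotheses the coefficients contain $\partial_\vr p$ and $\partial_\vt e$ (equivalently, the sound speed), so they are only $C^2$.

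This is not a removable technicality: with $C^2$ coefficients, $W^{3,2}$ regularity is in general lost instantly. Take $w_t+\lambda(w)w_x=0$ with $w_0(x)=x$. Then $w(t,\cdot)=({\rm id}+t\lambda)^{-1}$ and $\partial_x^2 w=-t\,\lambda''(w)\,(1+t\lambda'(w))^{-3}$. Hence $\partial_x^3 w(t,\cdot)\in L^2_{\rm loc}$ for some $t>0$ forces $\lambda''\in W^{1,2}_{\rm loc}$.

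This is exactly the equation for the Riemann invariant of a planar isentropic simple wave, with $\lambda=u+c$. Consider the equation of state $p=\vr\vt+\pi(\vr)$, $e=c_v\vt+\phi(\vr)$ with $\phi'=\pi/\vr^2$, and $s=c_v\log\vt-\log\vr$. It satisfies Gibbs' law and every hypothesis of the statement when $\pi\in C^3$ and $\pi'\geq 0$. For it, $\lambda''$ is a nonvanishing multiple of $\pi'''(\vr)$ plus a $C^1$ function. So $\lambda''$ fails to be in $W^{1,2}_{\rm loc}$ whenever $\pi'''$ does. By finite propagation speed, such a wave sits inside $\Omega$ for smooth compatible data.

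Consequently, under the hypotheses as written, your a priori bound on $\sum_{k\le 3}\|\partial_t^k\vU\|_{W^{3-k,2}}$ cannot hold, and neither can the construction in $\cap_k C^{3-k}([0,T];W^{k,2}(\Omega))$. You need one more derivative on the constitutive relations, e.g.\ $p,e,s\in C^4$, so that the $A_j$ are $C^3$. With that, your scheme goes through as outlined. The same caveat applies to the paper's remark, after the statement, that the solutions are constructed in that Sobolev class.
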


By classical solution we mean that all functions $\vr$, $\vt$, and $\vu$ are continuously differentiable up to the boundary
and the equations hold pointwise in $(0,T) \times \Omega$. As a matter of fact, the solutions are constructed in the
Sobolev class
\[
\cap_{k = 0}^3 C^{3 - k}([0,T]; W^{k,2}(\Omega)),
\]
whence the desired regularity follows from the Sobolev embedding $W^{2,2}(\Omega) \hookrightarrow C(\Ov{\Omega})$, $\Omega
\subset \R^3$. 

\subsubsection{Weak solutions}
\label{ws}

It is well known that the life span of classical/strong solutions may be finite for a large class of initial data, see e.g. the monograph by Smoller 
\cite{SMO}. Thus if the Euler system is to be retained as a mathematical model of the gas in a large time lap, the class of solutions must be extended.  
In view of the character of the available results we want to discuss, we consider the space periodic boundary conditions replacing $\Omega$ by the flat torus
\[
\Omega = \mathbb{T}^d = \left\{ (x_1,\dots, x_d) \ \Big| \ x_i \in [0,1]|_{\{0;1\}},\ i = 1, \dots, d \right\},\ d=1, 2,3.
\]

Adopting the equation of state \eqref{i12} we say
a triple $(\vr, \vt, \vu)$ is \emph{weak solution} of the Euler system in $[0,T) \times \mathbb{T}^d$ emanating from the initial 
data $(\vr_0, \vt_0, \vu_0)$ if the following holds:
\begin{equation} \label{i13}
	\int_0^T \intTd{ \Big( \vr \partial_t \varphi + \vr \vu \cdot \Grad \varphi \Big) } \dt = - \intTd{ \vr_0 \varphi (0, \cdot) }
\end{equation}
for any $\varphi \in C^1_c([0,T) \times \Td)$; 
\begin{equation} \label{i14}
	\int_0^T \intTd{ \Big( \vr \vu \cdot \partial_t \bfphi + \vr \vu \otimes \vu: \Grad \bfphi + p(\vr, \vt) \Div \bfphi \Big) } \dt = - \intTd{ \vr_0 \vu_0 \cdot \bfphi (0, \cdot) }	
\end{equation}
for any $\bfphi \in C^1_c([0,T) \times \Td; \R^d)$;
\begin{align} 
	\int_0^T &\intTd{ \left( \left( \frac{1}{2} \vr |\vu|^2 + \vr e(\vr, \vt) \right) \partial_t \varphi + 
		\left( \frac{1}{2} \vr |\vu|^2 + \vr e(\vr, \vt) + p(\vr, \vt) \right) \vu  \cdot \Grad \varphi \right) } \dt \br &\quad = - \intTd{
		\left( \frac{1}{2} \vr_0 |\vu_0|^2 + \vr_0 e(\vr_0, \vt_0) \right) \varphi (0, \cdot) }
	\label{i15}
\end{align}
for any $\varphi \in C^1_c([0,T) \times \Td)$. 

In addition, we say that a weak solutions is (entropy) admissible, if the entropy inequality
\begin{equation} \label{i16}	
	\int_0^T \intTd{ \Big( \vr s(\vr, \vt) \partial_t \varphi + \vr s(\vr, \vt) \vu \cdot \Grad \varphi \Big) } \dt \leq - 
	\intTd{ \vr_0 s(\vr_0, \vt_0) \varphi (0, \cdot) }	
\end{equation}
holds for any  $\varphi \in C^1_c([0,T) \times \Td)$, $\varphi \geq 0$.

The weak solutions (without any admissibility condition like \eqref{i16}) are not uniquely determined by the 
initial data, see Dafermos \cite{D4a},  Smoller \cite{SMO}. Recently, the method of convex integration 
developed in the context of Euler equations in the seminal work of De Lellis and Sz\' ekelihidi \cite{DelSze13}, \cite{DelSze3}, 
revealed a number of rather disturbing facts concerning well posedness of the Euler system even in the class of \emph{entropy admissible} 
weak solutions. 

Let us start with the classical Riemann problem on an infinite strip 
 \[
\mathbb{R} \times \mathbb{T}^1 = \left\{ (x_1,x_2) \ \Big|\ x_1 \in \mathbb{R},\ x_2 \in [0,1]|_{\{0;1\}} \right\}, 
\]
with the initial data determined by constant vectors
\begin{equation} \label{i17}
	(\vr_\ell, \vt_\ell, \vu_\ell),\ (\vr_r, \vt_r, \vu_r) \in (0,\infty) \times (0, \infty) \times \mathbb{R}^2. 
\end{equation}	
More specifically, we say
a triple $(\vr_R, \vt_R, \vu_R)$ is  \emph{Riemann solution} of the Euler system in $[0,T) \times \mathbb{R} \times \mathbb{T}^1$ emanating from the Riemann data 
data $(\vr_\ell, \vt_\ell, \vu_\ell)$, $(\vr_r, \vt_r, \vu_r)$ if the following holds:
There exists $\lambda > 0$ such that 
\begin{equation} \label{i18}
	(\vr_R, \vt_R, \vu_R)(t,x) = (\vr_\ell, \vt_\ell, \vu_\ell) \ \mbox{if}\ x_1 < - \lambda t,\ 
	(\vr_R, \vt_R, \vu_R)(t,x) = (\vr_r, \vt_r, \vu_r) \ \mbox{if}\ x_1 > \lambda t;
\end{equation}
\begin{equation} \label{i19}
	\int_0^T \intTS{ \Big( \vr_R \partial_t \varphi + \vr_R \vu_R \cdot \Grad \varphi \Big) } \dt = - \intTS{ \left( \mathds{1}_{x_1 < 0} \vr_\ell + 
		\mathds{1}_{x_1 > 0} \vr_r \right)	 \varphi (0, \cdot) }
\end{equation}
for any $\varphi \in C^1_c([0,T) \times (\mathbb{R} \times \mathbb{T}^1))$; 
\begin{align} \label{i20}
	\int_0^T & \intTS{ \Big( \vr \vu \cdot \partial_t \bfphi + \vr \vu \otimes \vu: \Grad \bfphi + p(\vr, \vt) \Div \bfphi \Big) } \dt \br 
	&= - \intTS{ \left( \mathds{1}_{x_1 < 0} \vr_\ell \vu_\ell + 
		\mathds{1}_{x_1 > 0} \vr_r \vu_r	    \right)\cdot \bfphi (0, \cdot) }	
\end{align}
for any $\bfphi \in C^1_c([0,T) \times (\mathbb{R} \times \mathbb{T}^1) ; \mathbb{R}^2)$;
\begin{align} 
	\int_0^T &\intTS{ \left( \left( \frac{1}{2} \vr |\vu|^2 + \vr e(\vr, \vt) \right) \partial_t \varphi + 
		\left( \frac{1}{2} \vr |\vu|^2 + \vr e(\vr, \vt) + p(\vr, \vt) \right) \vu  \cdot \Grad \varphi \right) } \dt \br &= - \intTS{
		\left[ \mathds{1}_{x_1 < 0} \left( \frac{1}{2} \vr_\ell |\vu_\ell|^2 + \vr_\ell e(\vr_\ell, \vt_\ell) \right) + \mathds{1}_{x_1 > 0} \left( \frac{1}{2} \vr_r |\vu_r|^2 + \vr_r
		e(\vr_r, \vt_r) \right) 
		\right] \varphi (0, \cdot) }
	\label{i21}
\end{align}
for any $\varphi \in C^1_c([0,T) \times(\mathbb{R} \times \mathbb{T}^1) )$; 	
\begin{align} 	
	\int_0^T &\intTS{ \Big( \vr s(\vr, \vt) \partial_t \varphi + \vr s(\vr, \vt) \vu \cdot \Grad \varphi \Big) } \dt \br &\leq - 
	\intTS{ \left( \mathds{1}_{x_1 < 0} \vr_\ell s(\vr_\ell, \vt_\ell) +   \mathds{1}_{x_1 > 0} \vr_r s(\vr_r, \vt_r)\right) \varphi (0, \cdot) }
	\label{i22}	
\end{align}
for any  $\varphi \in C^1_c([0,T) \times (\mathbb{R} \times \mathbb{T}^1))$, $\varphi \geq 0$.

The following result was proved by Klingenberg et al \cite[Theorem 1.1]{KlKrMaMa}, see also Al Baba et al. \cite{ABKlKrMaMa}. 

\begin{Theorem} [\bf Ill posedness for Riemann problem] \label{Ti2} 
	
	\noindent	
	There exist Riemann data \\ $(\vr_\ell, \vt_\ell, \vu_\ell)$, $(\vr_r, \vt_r, \vu_r)$ such that the Riemann problem \eqref{i18}--\eqref{i22} 
	admits infinitely many solutions $(\vr_R, \vt_R, \vu_R)$ in $[0,T) \times (\mathbb{R} \times \mathbb{T}^1)$, $T > 0$ arbitrary. Moreover, all 
	solutions satisfy 
 \[
		(\vr_R, \vt_R, \vu_R)(t,x) = (\vr_\ell, \vt_\ell, \vu_\ell) \ \mbox{if}\ x_1 < - \lambda t,\ 
		(\vr_R, \vt_R, \vu_R)(t,x) = (\vr_r, \vt_r, \vu_r) \ \mbox{if}\ x_1 > \lambda t,
\]
	with the same constant $\lambda$, and there are constants 
	\[
	0 < \underline{\vt} \leq \Ov{\vt},\ 
	0 < \underline{\vr} \leq \Ov{\vr},\ \Ov{\vu} > 0 
	\]
	such that 
	\[
		\underline{\vt} \leq \vt_R \leq \Ov{\vt},\ 	\underline{\vr} \leq \vr_R \leq \Ov{\vr},\ |\vu_R| \leq \Ov{\vu}
	\]
	a.e. in $(0,T) \times (\mathbb{R} \times \mathbb{T}^1)$.	
\end{Theorem}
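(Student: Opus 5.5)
We follow the strategy of Chiodaroli--Kreml and Klingenberg et al.: build a self-similar ``fan subsolution'' for the full Euler system and apply De Lellis--Sz\'ekelyhidi convex integration in the two regions between the outer waves, so that only the velocity oscillates while density and temperature stay piecewise constant.

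\textbf{Fan partition.} For speeds $\mu_0 < \mu_1$, divide $(0,T)\times(\mathbb{R}\times\mathbb{T}^1)$ into five regions:
\[
\Omega_- = \{x_1 < \mu_0 t\},\quad \Omega_1 = \{\mu_0 t < x_1 < \mu_1 t\},\quad \Omega_+ = \{x_1 > \mu_1 t\},
\]
where the middle region is subdivided further if needed (in Klingenberg et al.\ there are two interior states separated by a contact-like interface). The left and right Riemann states are prescribed on $\Omega_\mp$. On each interior region we prescribe a constant density $\vr_i$, a constant temperature $\vt_i$, a constant mean momentum $\vr_i \Ov{\vu}_i$, a constant traceless symmetric matrix $\mathbb{U}_i$ and a constant $C_i$, subject to the subsolution inequality
\[
e_{\max}\left(\Ov{\vu}_i\otimes\Ov{\vu}_i - \mathbb{U}_i\right) < \frac{C_i}{d}.
\]
On the interfaces $x_1 = \mu_j t$ we impose Rankine--Hugoniot relations for mass, for momentum (with the turbulent stress $\vr(\Ov{\vu}\otimes\Ov{\vu}-\mathbb{U}) + \tfrac{1}{d}\vr C\,\mathbb{I}$ in place of $\vr\vu\otimes\vu$) and for energy, with $\tfrac12 \vr|\vu|^2$ replaced by $\tfrac{d}{2}\tfrac{C}{d}\vr$. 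We also impose the entropy inequality in the strict jump form
\[
\mu_j[\vr s] - [\vr s\,\Ov{\vu}\cdot e_1] \le 0 .
\]
Since $\vr$ and $\vt$ are constant in each region, $\vr s$ is too, so entropy admissibility of the final solution reduces to these algebraic jump inequalities.

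\textbf{Convex integration step.} In each interior region the problem reduces to incompressible Euler-type convex integration with constant density and pressure. The Chiodaroli / De Lellis--Sz\'ekelyhidi lemma on a bounded open set then yields infinitely many $(\vu, \mathbb{U})$ such that:
\begin{itemize}
\item $\vu = \Ov{\vu}_i + \tilde{\vu}$ with $\tilde{\vu}$ compactly supported in the region;
\item $\partial_t \tilde{\vu} + \Div \tilde{\mathbb{U}} = 0$ and $\Div \tilde{\vu} = 0$;
\item $\vu\otimes\vu - \tfrac{1}{d}|\vu|^2\mathbb{I} = \Ov{\vu}_i\otimes\Ov{\vu}_i - \mathbb{U}_i + \tilde{\mathbb{U}}$ and $|\vu|^2 = C_i$ a.e.
\end{itemize}
Because $|\vu|^2$ is constant a.e., the kinetic energy and energy flux coincide with the subsolution quantities. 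All equations \eqref{i19}--\eqref{i22} therefore hold with the same $\vr,\vt$. The fan regions are unbounded in $x_2$ only through the torus, and the construction is done on compact subsets exhausting them. The bounds $\underline{\vr}\le \vr_R \le \Ov{\vr}$ and $\underline{\vt}\le \vt_R \le \Ov{\vt}$ are immediate from piecewise constancy, and $|\vu_R|\le \max_i \sqrt{C_i}$. The constant $\lambda$ is $\max(|\mu_0|,|\mu_1|)$, independent of the particular solution.

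\textbf{Main obstacle.} The hard step is the algebraic one: showing that the finite system of Rankine--Hugoniot equalities, strict subsolution inequalities and entropy inequalities has a solution for some Riemann data. We would choose the data as in the isentropic construction, namely a Riemann problem whose self-similar solution contains a shock, so that the chosen data admit a compressive shock. We would then solve the unknowns perturbatively around the isentropic ``admissible fan'' of Chiodaroli--Kreml, adjusting the temperatures so that the energy equation for polytropic $p = \vr\vt$ closes. The verification that the entropy jumps have the correct sign would be done by explicit computation, possibly with numerical choice of parameters, as in Klingenberg et al.
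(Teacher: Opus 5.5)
The paper does not prove this theorem; it quotes it from Klingenberg et al.\ (see also Al Baba et al.). Your outline is the scheme used there: an admissible fan subsolution with piecewise constant $\vr,\vt$, convex integration of the velocity in the wedge, and reduction of \eqref{i19}--\eqref{i22} to Rankine--Hugoniot relations and entropy jump inequalities.

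\textbf{The main gap.} What you actually carry out is only the standard, data-independent half. The theorem is an existence statement, and its whole content lies in the step you call the main obstacle and leave as a plan: exhibiting Riemann data for which the algebraic system has a solution satisfying the strict inequalities. This is not routine.

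Already for a single wedge with Riemann velocities normal to the interfaces, write $\Ov{\vu}_1=(a,b)$. The tangential momentum conditions read $\mu_0\vr_1 b=\vr_1(\mathbb{U}_1)_{12}=\mu_1\vr_1 b$, which forces $b=(\mathbb{U}_1)_{12}=0$. For fixed data you are then left with
\begin{itemize}
\item seven unknowns $\mu_0,\mu_1,\vr_1,\vt_1,a,(\mathbb{U}_1)_{11},C_1$;
\item six equations (mass, normal momentum and energy on each interface);
\item the strict subsolution inequality and the two entropy inequalities.
\end{itemize}
Compared with the isentropic problem, energy conservation adds two equations but only one unknown. Also, the energy inequalities that made the isentropic fans admissible are replaced by equalities plus genuinely new entropy inequalities.

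Your shortcut, perturbing the Chiodaroli--Kreml fan and adjusting temperatures, does not work as stated. That fan is admissible precisely because it dissipates energy across its interfaces, so the defect in the energy relations is not a small parameter. Absorbing it into $c_v\vr\vt$ changes $p=\vr\vt$ by a comparable amount and destroys the momentum relations, so the full nonlinear system must be re-solved. Moreover, embedded via $\vt=\kappa\vr^{\gamma-1}$, the isentropic fan has constant specific entropy, so it tells you nothing about the sign of the entropy jumps. To close the proof you must actually solve this system. You can do this either by exhibiting explicit data and verifying all relations and strict inequalities exactly, or by proving solvability for a class of data. This is what the cited works do.

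\textbf{Stress bookkeeping.} Separately, your conventions are inconsistent and would produce the wrong algebraic system. With the subsolution condition $\lambda_{\max}(\Ov{\vu}_i\otimes\Ov{\vu}_i-\mathbb{U}_i)<C_i/d$, the De Lellis--Sz\'ekelyhidi lemma gives $\vu\otimes\vu-\frac1d|\vu|^2\mathbb{I}=\mathbb{U}_i+\tilde{\mathbb{U}}$. Your identity cannot hold: its left-hand side is traceless, while its right-hand side has trace $|\Ov{\vu}_i|^2$. Accordingly, the quantity replacing $\vr\vu\otimes\vu$ in the Rankine--Hugoniot conditions on the wedge boundary is $\vr_i\big(\mathbb{U}_i+\frac{C_i}{d}\mathbb{I}\big)$, of trace $\vr_iC_i$. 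It is not $\vr_i(\Ov{\vu}_i\otimes\Ov{\vu}_i-\mathbb{U}_i)+\frac1d\vr_iC_i\mathbb{I}$, whose trace is $\vr_i(|\Ov{\vu}_i|^2+C_i)$.

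The remaining points are fine:
\begin{itemize}
\item the compactly supported, divergence-free perturbation drops out of all four weak formulations, since in each wedge the mass, energy and entropy densities are constant, their fluxes are constant multiples of $\vu$, and the momentum perturbation solves the linear system;
\item the bounds follow from piecewise constancy;
\item $\lambda=\max\{|\mu_0|,|\mu_1|\}$ works.
\end{itemize}
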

\noindent
Note carefully that the solutions claimed in Theorem \ref{Ti2} are entropy admissible. 

Motivated by the above result, we say that initial data $(\vr_0, \vt_0, \vu_0)$ are \emph{wild} if there exists a time $T > 0$ such that the Euler system 
admits infinitely many entropy admissible weak solutions emanating from $(\vr_0, \vt_0, \vu_0)$ in $[0,\tau] \times \mathbb{T}^d$ for any $0 < \tau < T$.
It turns out that the set of wild data is in fact dense in the $L^q$ topology for any finite $q$ \cite[Theorem 3.2]{ChFe2023}: 
\begin{Theorem}[\bf Density of wild data] \label{Ti3}
	Let 
	\[
		\vr_0 \in W^{k,2}(\TD),\ \inf_{\TD} \vr_0 > 0,\ \vt_0 \in W^{k,2}(\TD),\ \inf_{\TD} \vt_0 > 0,\ \vu_0 \in W^{k,2}(\TD; \mathbb{R}^2),\ k > 2,
	\] 
	and $1 \leq q < \infty$ be given.

	Then for any $\ep > 0$, there exist initial data $(\vr_{0, \ep}, \vt_{0, \ep}, \vu_{0, \ep})$ enjoying the following properties:
	\begin{itemize}
		\item
		The data	
		$(\vr_{0, \ep}, \vt_{0, \ep}, \vu_{0, \ep})$ are piecewise smooth, specifically, there are finitely many points $x_1^1, \dots, x^N_1$ such that
		$(\vr_{0, \ep}, \vt_{0, \ep}, \vu_{0, \ep})(x_1,x_2)$ are continuously differentiable whenever $x_1 \not \in \{ x_1^1, \dots, x^N_1 \}$, 
		$x_2 \in \mathbb{T}^1$.
		\item 
		There exists $T > 0$ such that the Euler system admits infinitely many admissible weak solutions $(\vr^n, \vt^n, \vu^n)_{n \in N}$ 
		in $L^\infty([0,T) \times \TD; \mathbb{R}^4)$ emanating from the initial data $(\vr_{0, \ep}, \vt_{0, \ep}, \vu_{0, \ep})$ such that 
		\[
			(\vr^n, \vt^n, \vu^n)|_{[0,\tau) \times B_\ep} \not \equiv (\vr^m, \vt^m, \vu^m)|_{[0,\tau) \times B_\ep}, \forall \ m \ne n	
		\]
		whenever $0 < \tau < T$ and $B_\ep \subset \TD$ is a ball of radius $\ep$.
		\item 
		\[
			\left\| \Big(\vr_{0,\ep} - \vr_0;  \vt_{0,\ep} - \vt_0; \vu_{0,\ep} - \vu_0 \Big) \right\|_{L^q(\TD; \mathbb{R}^4)} \leq \ep;
		\]
		
		\item 
		\[
			\inf_{[0,T) \times \TD} \vr^n > 0,\ \inf_{[0,T) \times \TD}	 \vt^n > 0, 
		\]
		and there exists a compact set $\mathcal{S} \subset  \TD$, $|\mathcal{S}| < \ep$ such that 
		$(\vr^n, \vt^n, \vu^n)$ are continuously differentiable in $(0,T) \times (\TD \setminus \mathcal{S})$ for any $n \in N$.
	\end{itemize}
	
\end{Theorem}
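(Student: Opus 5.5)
The plan is to combine the local well-posedness of Theorem \ref{Ti1} with the localized ill-posedness of the Riemann problem in Theorem \ref{Ti2}, and then glue along a finite family of thin vertical strips. First, run Schochet's result (in its periodic version on $\TD$, where no compatibility conditions arise) for the smooth data $(\vr_0,\vt_0,\vu_0)$. This gives a classical solution $(\vr,\vt,\vu)$ on some $[0,T_0)\times\TD$, with $\vr,\vt$ bounded below and all quantities Lipschitz.

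Next, pick finitely many points $x_1^1,\dots,x_1^N$ spaced more finely than $\ep$, so that every ball $B_\ep$ contains a vertical segment $\{x_1=x_1^j\}$. In a strip of width $\delta\ll\ep$ around each $x_1^j$, modify the data so that it equals the constant wild Riemann states of Theorem \ref{Ti2}, jumping at $x_1^j$. More precisely, take the Riemann data $(\vr_\ell,\vt_\ell,\vu_\ell)$, $(\vr_r,\vt_r,\vu_r)$ from that theorem, or a Galilean/scaling transform of them. Set the perturbed data equal to these constants in $x_1^j-\delta<x_1<x_1^j$ and $x_1^j<x_1<x_1^j+\delta$, respectively. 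On thin transition layers join them smoothly to $(\vr_0,\vt_0,\vu_0)$, keeping $\vr,\vt$ positive. The data are then piecewise smooth, and since the modified set has measure $O(N\delta)$ with uniformly bounded values, the $L^q$ distance is $\le\ep$ for $\delta$ small, as $q<\infty$.

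For the solutions, consider the data with the jumps smoothed out. On the smooth part, solve by Theorem \ref{Ti1}; by finite propagation speed this solution is uniquely determined in the domain of dependence. In particular, in $\{|x_1-x_1^j|<\delta/2\}$ it stays constant for a short time $t<T\sim\delta/\Lambda$, where $\Lambda$ bounds the characteristic speeds. Inside the cone $|x_1-x_1^j|<\lambda t$, insert any of the infinitely many wild Riemann solutions of Theorem \ref{Ti2}, shifted to $x_1^j$. Since these coincide with the constant states outside $|x_1-x_1^j|<\lambda t$, and $\lambda T<\delta/2$, the glued function satisfies \eqref{i13}--\eqref{i16} weakly: the equations hold locally in each region, and the interfaces are regions of overlap where the pieces agree. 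Choosing different Riemann solutions at each $x_1^j$ yields infinitely many distinct admissible solutions. They differ on every $[0,\tau)\times B_\ep$ because each such ball meets some cone around a jump for every $\tau>0$; this needs Theorem \ref{Ti2}'s solutions to differ on every small time interval, which follows from their self-similar structure. The bounds from Theorem \ref{Ti2} give positivity, and smoothness holds off $\mathcal S=\bigcup_j\{|x_1-x_1^j|\le\delta/2\}$, whose measure is $\le N\delta<\ep$.

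The main obstacle is the localization step. A single Schochet solution cannot directly absorb a discontinuous datum, so one must rigorously use finite speed of propagation for the smooth pieces, plus the constancy of the wild solutions outside the cone. Concretely, I would solve separately the problems with data smoothly extended across each strip and invoke a local uniqueness/domain-of-dependence argument for classical solutions. A second, lesser issue is ensuring the Riemann states can be attached in a strip of the torus; this holds since the construction in Theorem \ref{Ti2} is periodic in $x_2$.
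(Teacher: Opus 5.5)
Your architecture is the right one. You place the constant wild Riemann states from Theorem~\ref{Ti2} in thin strips around finitely many lines $x_1=x_1^j$, which is legitimate since only $L^q$-closeness with $q<\infty$ is required. You take a classical solution of the smoothed problem elsewhere, and you glue by finite speed of propagation on regions where both pieces equal the constant states. Up to the small repairs noted at the end, the bounds, the set $\mathcal{S}$ and the weak and entropy formulations go through.

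The step that fails as written is the separation property $(\vr^n,\vt^n,\vu^n)|_{[0,\tau)\times B_\ep}\not\equiv(\vr^m,\vt^m,\vu^m)|_{[0,\tau)\times B_\ep}$ for \emph{every} $\tau\in(0,T)$. On $[0,\tau)$ a ball $B_\ep$ sees only the part $\{t<\tau,\ |x_1-x_1^j|<\lambda t,\ x_2\in I\}$ of the wild region, with $I$ an $x_2$-interval of length comparable to $\ep$. So two of your Riemann solutions must differ there for all $\tau>0$ and all such $I$.

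Theorem~\ref{Ti2} as stated gives only infinitely many distinct solutions on $[0,T)$, with no information on where they differ. Self-similarity does not provide it either. In the convex integration constructions behind Theorem~\ref{Ti2}, only the fan subsolution is self-similar; the oscillatory velocity perturbations in the wild wedge live on fixed scales. Two of them may coincide for $t<t^*$, or on a band in $x_2$, and then your glued solutions coincide on $[0,\tau)\times B_\ep$ for $\tau<t^*$. Integer rescaling $(t,x)\mapsto(Kt,Kx)$ keeps the Riemann data and the $1$-periodicity in $x_2$, so it cures the $x_2$-localization. It cannot, however, push the first time of discrepancy of a fixed pair to $0$, so the property cannot be extracted from Theorem~\ref{Ti2} as a black box.

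To close the gap you have to open the construction. For instance, decompose the wild wedge $\{\nu_- t<x_1<\nu_+ t\}$, up to a null set, into Whitney-type cubes (dyadic in $x_2$); these accumulate at every point of $\{t=0,\ x_1=0\}$. Apply the convex integration lemma with the constant state of the fan subsolution in each cube, and let the $n$-th solution use the $n$-th member of a family of pairwise different perturbations in every cube. The perturbations have disjoint compact supports and a uniform $L^\infty$ bound. Hence their sum still solves the linear relaxed system and satisfies the pointwise constraint a.e.\ in the wedge, and admissibility is inherited from the fan subsolution. The resulting solutions differ in every cube, hence on every $[0,\tau)\times B_\ep$.

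Alternatively, if the admissible fan subsolutions come in a continuous family, distinct members have different constant densities on some sector $\{\mu t<x_1-x_1^j<\mu' t\}$. That $x_2$-independent, self-similar difference is the kind of self-similarity your argument really needs.

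Two smaller repairs are needed. First, once the jump is smoothed, the classical solution is not constant on $\{|x_1-x_1^j|<\delta/2\}$. Confine the smoothing to $\{|x_1-x_1^j|<\eta\}$ with $\eta\ll\delta$, and glue on the bands $\{\eta+\Lambda t<\pm(x_1-x_1^j)<\delta-\Lambda t\}$, where both pieces equal the left, respectively right, state. Second, Theorem~\ref{Ti1} is a three-dimensional $W^{3,2}$ result. On $\TD$ you need the $W^{k,2}$, $k>2$, local theory for symmetric hyperbolic systems, or you may simply mollify the data first, which the $L^q$-closeness permits.
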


Other examples of wild data for the Euler system \eqref{i1}--\eqref{i3}, including the case of a bounded domain with an impermeable boundary, were obtained in  \cite{FeKlKrMa}.
Problems with random forcing were studied in \cite{ChFeFl2019}. 

\subsection{Well posedness for the Euler system?}

Let us finish the introductory section by formulating the first and still largely open problem:

\bigskip
\hrule
\bigskip
	
{\bf Open problem I:} 

\noindent
{\it Despite the large number of recent results concerning the existence of wild data and infinitely many weak solutions to the Euler 
system, the existence of an entropy admissible solution for arbitrary, say bounded, initial data remains open.  
Specifically, does the Euler system admit a globally defined entropy admissible weak solution for any finite energy initial data? 
}

\bigskip
\hrule
\bigskip

Introducing the class of weak solutions seemingly did not have any positive impact on the problem of well posedness of the Euler system. 
As we have seen above, even the entropy admissible ones are not uniquely determined by the (initial) data, and, moreover, 
the class of weak solutions is possibly not large enough to guarantee existence. In this survey, we summarize some recent efforts 
to restore well posedness for the Euler and possibly similar systems in fluid dynamics.

\section{Dissipative measure--valued (DMV) solutions}
\label{d}

The idea of the concept of \emph{measure--valued solution} to systems of conservation law goes back to the pioneering paper by 
DiPerna \cite{DiP2}, and was developed in the context of the incompressible Euler system in a series of papers of DiPerna and Majda \cite{DiPMaj88}, \cite{DiPMaj87}, 
\cite{DiPMaj87a}. The Euler system is interpreted as an inertial range of turbulence, specifically as a limit of a possibly oscillatory sequence of 
consistent approximations, cf. also Bardos and Titi  \cite{BarTi2013}, \cite{BaTi}.

Adopting the standard equation of state \eqref{i12}
we set
\begin{equation} \label{d1}
p(\vr, S) = (\gamma - 1) \vr e (\vr, S) = \left\{ \begin{array}{l}
\vr^\gamma \exp \left( \frac{S}{c_v \vr} \right) \ \mbox{if} \ \vr > 0, \\
0 \ \mbox{if}\ \vr = 0,\ S \leq 0, \\
\infty \ \mbox{otherwise}.
\end{array}	
\right.
\end{equation}
It is easy to check
\begin{equation} \label{d2}
	\frac{\partial \vr e(\vr, S)}{\partial S} = \vt > 0 \ \mbox{whenever}\ \vr > 0,
\end{equation}
meaning $\vr e(\vr, S)$ is a strictly increasing function of total entropy $S$.
Similarly, we define the kinetic energy
\begin{equation} \label{d3}
\frac{1}{2} \frac{|\vm|^2}{\vr} = \left\{ \begin{array}{l}
	\frac{1}{2} \frac{|\vm |^2}{\vr} \ \mbox{if}\ \vr > 0,\\
	0 \ \mbox{if}\ \vr = 0, \vm = 0. \\
	\infty \ \mbox{otherwise} \end{array}
\right.
\end{equation}
Now, by virtue of \eqref{d1}, \eqref{d3}, the total energy $E$ expressed in terms of the variables $(\vr, \vm, S)$, 
\[
E(\vr, \vm, S) = \frac{1}{2} \frac{|\vm|^2}{\vr} + \vr e(\vr, S),
\]
is a convex l.s.c. function in $R^{d+2}$ ranging in $[0, \infty]$, strictly convex on its domain. 
Note that convexity of the total energy is equivalent to   
\emph{thermodynamic stability} of the system, cf.
Bechtel, Rooney, Forest \cite{BEROFO} or
\cite[Chapter 2, Section 2.2.4]{FeLMMiSh}. This assumption
will be crucial in the subsequent analysis.

\subsection{Consistent approximations}

Very roughly indeed, one may say that a consistent approximation is a family  $(\vre, \vme, S_\ep)_{\ep > 0}$ satisfying the Euler system 
modulo consistency errors vanishing in the asymptotic limit $\ep \to 0$. 
More specifically, we call a family $(\vre, \vme, {S}_\ep)_{\ep > 0}$ is \emph{consistent approximation} of
the Euler system \eqref{i1}--\eqref{i3}, \eqref{i4} in $(0,T) \times \Omega$ with the initial data 
\[
\vr(0, \cdot) = \vr_0,\ \vm(0, \cdot) = \vm_0,\ S(0, \ep) = S_0
\]
if there holds:
	\begin{itemize}
		
		\item {\bf approximate total energy inequality:}
		\begin{equation} \label{d4}
			\intO{ E(\vre, \vme, S_\ep)(\tau, \cdot) } \leq \intO{ E(\vr_{0}, \vm_{0} , S_{0} ) } + e^1_\ep
		\end{equation}
		for a.a. $\tau \in (0,T)$, where $e^1_\ep \to 0$ as $\ep \to 0$;
		\item
		{\bf approximate equation of continuity:}
		\begin{align}
			\left[ \intO{ \vre (t, \cdot)\varphi  } \right]_{t = \tau_1-}^{t= \tau_2-} &= \int_{\tau_1}^{\tau_2} \intO{  \vme \cdot \Grad \varphi  } \dt + \int_{[\tau_1, \tau_2)} \D e^2_{\ep}[\varphi],\ \vre(0-, \cdot) \equiv \vr_{0} \br
			e^2_\ep[\varphi] &\in \mathcal{M}[0,T],\ \int_{[0,T]} \D |e^2_\ep [\varphi] | \to 0 \ \mbox{as}\ \ep \to 0
			\label{d5}
		\end{align}	
		for any $0 \leq \tau_1 \leq \tau_2 \leq T$, and any $\varphi \in C^\infty(\Ov{\Omega})$;
		
		\item {\bf approximate momentum balance:}
		\begin{align}
			\left[ \intO{ \vme (t, \cdot) \cdot \bfphi } \right]_{t = \tau_1-}^{t = \tau_2 -} &= 	
			\int_{\tau_1}^{\tau_2} \intO{ \mathds{1}_{\vre > 0} \left( \frac{\vme \otimes \vme }{\vre} + p(\vre, S_\ep) \mathbb{I} \right) :
				\Grad \bfphi  } \dt\br  &+ \int_{[\tau_1,\tau_2)} \D {e}^3_\ep [\bfphi],\ \vme(0-, \cdot) \equiv \vm_{0},
			\br
			{e}^3_\ep [\bfphi] &\in \mathcal{M}[0,T],\ \int_{[0,T]} \D |{e}^3_\ep [\bfphi] | \to 0 \ \mbox{as}\ \ep \to 0	
			\label{d6}
		\end{align}
		for any $0 \leq \tau_1 \leq \tau_2 \leq T$,	$\bfphi \in C^\infty (\Ov{\Omega}; R^d)$, $\bfphi \cdot \bm{n}|_{\partial \Omega} = 0$;
		\item
		{\bf entropy minimum principle:}
		\begin{equation} \label{d7}
			S_\ep \geq \underline{s} \vre \ \mbox{a.a. in}\ (0,T) \times \Omega
		\end{equation}
	for a certain $\underline{s} \in R$; 
	\item
		{\bf approximate entropy inequality:}
		\begin{align}
			\left[ \intO{ S_\ep (t, \cdot) \varphi } \right]_{t = \tau_1-}^{t = \tau_2-} &\geq 	
			\int_{\tau_1}^{\tau_2} \intO{ \mathds{1}_{\vre > 0} \left( S_\ep \frac{\vme}{\vre} \right) : \Grad \varphi } \dt	
			+ \int_{[\tau_1, \tau_2)} \D e^4_\ep[\varphi],\br
			S_\ep(0-, \cdot)  &\equiv S_{0},   \br
			e^4[\varphi] &\in \mathcal{M}[0,T],\ \int_{[0,T]} \D |e^4 [\varphi] | \to 0 \ \mbox{as}\ \ep \to 0
			\label{d8}
		\end{align}
		holds for any $0 \leq \tau_1 \leq \tau_2 \leq T$, and any $\varphi \in C^\infty(\Ov{\Omega})$, $\varphi \geq 0$.
	\end{itemize}
	Here, the \emph{consistency errors} $e^i[\varphi]$, $i = 2,3,4$ are signed measures on the compact time interval $[0,T]$.
	In particular, the quantities
	\[
	t \mapsto \intO{\vre (t, \cdot) \varphi},\ t \mapsto \intO{\vme \cdot \bfphi },\
	t \mapsto \intO{ S_\ep (t, \cdot) \varphi }
	\]
	can be interpreted as BV functions defined in $[0,T]$.

It is worth noting that we have replaced the energy equation \eqref{i3} by the integrated total energy balance \eqref{d4} and added the entropy inequality 
\eqref{d8}. Consistent approximations arise as solutions of suitable numerical schemes, several examples are discussed in the monograph
\cite[Part III]{FeLMMiSh}. Another, more physical example of a consistent approximation is the sequence obtained 
in the vanishing viscosity/dissipation limit, see \cite[Section 3.4]{BreFei17A}, \cite{FeiKliMar}. From this point of view, the asymptotic behaviour 
of a sequence of consistent approximation examined below is closely related to turbulence. 

\subsection{Asymptotic limits of consistent approximations}
\label{al}

It follows from the energy inequality \eqref{d4} combined with the minimum entropy
principle \eqref{d7} that
\begin{align}
	\vre &\to \vr \ \mbox{weakly-(*) in}\ L^\infty (0,T; L^\gamma(\Omega)), \br
	\vme &\to \vm \ \mbox{weakly-(*) in}\ L^\infty (0,T; L^{\frac{2 \gamma}{\gamma + 1}}(\Omega; R^d)), \br
	S_\ep &\to S \ \mbox{weakly-(*) in}\ L^\infty (0,T; L^\gamma(\Omega))
	\label{d9}
\end{align}
at least for a suitable subsequence, see \cite[Chapter 7]{FeLMMiSh}. Moreover, by the same token, we may assume
\begin{equation} \label{d10}
E(\vre, \vme, S_\ep) \to \Ov{E(\vr, \vm, S)} \ \mbox{weakly-(*) in}\ L^\infty(0,T; \mathcal{M}^+(\Ov{\Omega})), 
\end{equation}
where, in view of convexity of $E$, 
\begin{equation} \label{d11}
0 \leq \intO{ E(\vr, \vm, S) (\tau, \cdot) } \leq 
\int_{\Ov{\Omega}} \D \Ov{E(\vr, \vm, S)} \leq \intO{ E(\vr_0, \vm_0, S_0)} \equiv \mathcal{E}_0 \ \mbox{for a.a.}\ \tau \in (0,T).
\end{equation}
By analogy with the theory of (deterministic) turbulence, we may interpret $\mathcal{E}_0$ as the total (conserved) energy, 
$\intO{ E(\vr, \vm, S) }$ as the mean flow energy, and their difference 
$\mathcal{E}_0 - \intO{ E(\vr, \vm, S) }$ as turbulent energy. We say the consistency limit is \emph{turbulent} if 
$\mathcal{E}_0 - \intO{ E(\vr, \vm, S) } \ne 0$. As the weak solutions of the Euler system conserve the total energy, 
we immediately observe the following implication:
\begin{equation} \label{d12}
\mbox{{\it consistency limit is turbulent}} \ \Rightarrow \ 
\mbox{{\it the limit}}\ (\vr, \vm, S) \ \mbox{{\it IS NOT a weak solution of the Euler system.}}
\end{equation}	

\subsubsection{Dissipative solutions--limits of turbulent consistent approximations}

In view of the convergence \eqref{d9}, it is possible to associate to 
the sequence $(\vre, \vme, S_\ep)_{\ep > 0}$ a parametrised (Young) measure $( \mathcal{V}_{t,x} )_{(t,x) \in (0,T) \times \Omega}$, 
\begin{equation} \label{d13}
	\mathcal{V} = \mathcal{V}_{t,x}: (t,x) \in (0,\infty) \times \Omega \to \mathfrak{P} (\R^{d+2}),\
	\mathcal{V} \in L^\infty_{\rm weak-(*)}((0,\infty) \times \Omega; \mathfrak{P} (\R^{d+2})),
\end{equation}
where $\mathfrak{P}$ denotes the set of Borel probability measures defined
on the space of ``dummy'' variables
\[
\R^{d+2} = \left\{ (\tvr, \tvm, \widetilde{S}) \Big|  (\vr, \vm,  S) \in \R^{d+2}             \right\},
\]
\begin{equation} \label{d14}
\int_{\R^{d+2}} F(\tvr, \tvm, 	\widetilde{S}) \D \mathcal{V}_{t,x} = \left( {\rm weak-(*)}\lim_{\ep \to 0} F(\vre, \vme, S_{\ep}) \right)(t,x) 
\ \mbox{for a.a.}\ (t,x) \in (0,T) \times \Omega
\end{equation}
for any $F \in BC(\R^{d + 2})$. Note this step requires passing to a new subsequence as the case may be. 	

Possible concentrations are captured by 
the concentration measure
\begin{align} 
\mathfrak{C} &= {\rm weak-(*)} \lim_{\ep \to \infty} \left[ \frac{\vme \otimes \vme}{\vre} + p(\vre, S_\ep) \mathbb{I} \right] - 
\int_{\R^{d+2}} E(\tvr, \tvm, \widetilde{S}) \D \mathcal{V} 	\br
\mathfrak{C} &\in L^\infty_{\rm weak-(*)}(0,\infty; \mathcal{M}^+ (\Ov{\Omega}; \R^{d \times d}_{\rm sym})),
\label{d15}
\end{align}
where
the symbol $\mathcal{M}^+ (\Ov{\Omega}; \R^{d \times d}_{\rm sym}))$ denotes the set of all matrix-valued positively semi-definite finite Borel measures on
the compact set $\Ov{\Omega}$. 

The limit $(\vr, \vm, S)$, together with the total energy $\mathcal{E}_0$, is termed 
a dissipative (measure--valued) (DMV) solution of the Euler system.

\begin{Definition}[{\bf DMV solution}] \label{Dd1}

A trio $(\vr, \vm, S)$, together with the total energy $\mathcal{E}_0$ is called \emph{dissipative measure--valued (DMV) solution} of the Euler system \eqref{i1}--\eqref{i3}, \eqref{i4} in $[0,\infty) \times \Omega$, with the initial data 
$(\vr_0, \vm_0, S_0, \mathcal{E}_0)$
if there exists a parametrized measure $\{ \mathcal{V}_{t,x} \}$ and
a concentration measure $\mathfrak{C}$ such that the following holds:
\begin{itemize}
\item {\bf compatibility:}
\begin{equation} \label{d16}
(\vr , \vm, S)(t,x) = \int_{\R^{d+2}} (\tvr, \tvm, \widetilde{S})\ \D \mathcal{V}_{t,x} \ \mbox{for a.a.}\ (t,x) \in (0,T) \times \Omega;	
\end{equation}				
\item
{\bf equation of continuity:}
\begin{equation} \label{d17}
\int_0^\infty \intO{ \left[ \vr (t,x) \partial_t \varphi(t,x) +
\left( \int_{\R^{d+2}} \tvm \D \ \mathcal{V}_{t,x} \right) \cdot \Grad \varphi(t,x) \right] } \dt = - \intO{ \vr_0(x) \varphi(0, x) } 	
\end{equation}
for any $\varphi \in C^1_c([0,\infty) \times \Ov{\Omega})$;
\item
{\bf momentum equation:}
\begin{align}
\int_0^\infty &\intO{ \left[ \vm (t,x) \cdot \partial_t \bfphi(t,x)  + \left( \int_{\R^{d+2}} \mathds{1}_{\tvr > 0} \left(
\frac{ \tvm \otimes \tvm}{\tvr} + p(\tvr, \widetilde{S}) \mathbb{I} \right) \D \mathcal{V}_{t,x} \right) :\Grad \bfphi(t,x) 	 \right] } \dt \br
&= - \int_0^\infty \int_{\Ov{\Omega}} \Grad \bfphi(t,x) : \D \mathfrak{C} (t,x)
\label{d18}	- \intO{ \vc{m}_0(x) \cdot \bfphi(0, x) }
	\end{align}
for any $\bfphi \in C^1_c([0,\infty) \times \Ov{\Omega}; \R^d)$, $\bfphi \cdot \bm{n}|_{\partial \Omega} = 0$;
\item
{\bf entropy inequality:}
\begin{align}
\int_0^\infty &\intO{ \left[ S(t,x) \partial_t \varphi(t,x) + \left( \int_{\R^{d+2}}
\mathds{1}_{\tvr > 0} \widetilde{S} \frac{\tvm}{\tvr} \ \D \mathcal{V}_{t,x} \right) \cdot \Grad \varphi(t,x) \right] } \dt
\br &\leq - \intO{ S_0(x) \varphi (0, x) }
\label{d19}		
	\end{align}
for any $\varphi \in C^1_c([0,\infty) \times \Ov{\Omega})$, $\varphi \geq 0$;
\item
{\bf energy compatibility:}
\begin{align}
	\mathcal{E}_0 \geq 
	\intO{ E(\vr_0, \vm_0, S_0) }
	&\geq \intO{ \left( \int_{\R^{d+2}} E(\tvr, \tvm, \widetilde{S}) \ \D \mathcal{V}_{t,x} \right)  }	+   r(d, \gamma)\int_{\Ov{\Omega}} \ \D {\rm trace}[ \mathfrak{C}](t,x) ,\br
	r(d, \gamma) &= \min \left\{ \frac{1}{2}; \frac{d \gamma}{\gamma - 1} \right\}
	\label{d20}
\end{align}	
for a.a. $t \in (0,\infty)$.

\end{itemize}

\end{Definition}

\medskip

The concept of DMV solution was introduced in \cite{BreFei17} and later elaborated in the monograph \cite[Chapter 5]{FeLMMiSh}. Kr\" oner and Zajaczkowski \cite{KrZa}
proposed a different class of measure--valued solution to the Euler system postulating entropy \emph{equation} rather than inequality \eqref{d19}. Unfortunately,
a physically admissible weak solution with a shock that produces entropy does not fit in this class.

\subsubsection{Basic properties of DMV solutions}
\label{bps}

The reader may consult \cite[Chapter 5]{FeLMMiSh} for 
a detailed discussion of the concept of DMV solutions to various problems in fluid mechanics and their use in numerical analysis.
Here, we record their basic properties:
\begin{enumerate}
	\item {\bf Global existence.} The Euler system admits global--in--time DMV solutions for any choice of the initial data $(\vr_0, \vm_0, S_0, \mathcal{E}_0)$ belonging to the convex set 
\begin{equation} \label{d21}
X_D = \left\{ (\vr_0, \vm_0, S_0, \mathcal{E}_0) \ \Big| \ \intO{ E	(\vr_0, \vm_0, S_0) } \leq \mathcal{E}_0,\ 
S_0 \geq \underline{s} \vr_0 \right\}. 
\end{equation}	
see \cite[Proposition 3.8]{BreFeiHof19C}.
	
	\item {\bf Weak continuity.} 
	The functions $\vr$, $\vm$ belong to the class 
	\begin{equation} \label{d22}
		\vr,\ \vm \in BC_{\rm weak}([0, \infty); L^\gamma(\Omega)); 
	\end{equation}	
	the total entropy admits one-sided limits 
	\begin{equation} \label{d23}
		\lim_{t \to \tau \pm} \intO{ S (t, \cdot) \phi  } ,\
		\lim_{t \to 0+} \intO{ S (t, \cdot) \phi} 
		\ \mbox{for any} \ \phi \in L^{\gamma'}(\Omega). 
	\end{equation}	
	In particular $S(t\pm)$ can be identified with a function in $L^\gamma(\Omega)$, where we set $S(0-) = S_0$. There holds 
	\begin{equation} \label{d24}
		S(t-) \leq S(t+) \ \mbox{for any}\ t \geq 0,\ 
		\| S(t\pm) \|_{L^\gamma(\Omega)} \leq \Ov{S} \ \mbox{for all}\ 
		t \geq 0,
	\end{equation} 
	see \cite[Chapter 5, Section 5.2]{FeLMMiSh}. 
	
	\item {\bf Compatibility with strong solutions.} If 
	$(\vr, \vm, S) \in C^1([ T_1, T_2] \times \Ov{\Omega}; \R^d)$, 
	$\inf_{[T_1, T_2] \times \Ov{\Omega}} \vr > 0$, and 
	\[
	\intO{ E(\vr, \vm, S) (t, \cdot) } = \mathcal{E}_0 \ 
	\mbox{for a.a.}\ t \in (T_1, T_2), 
	\]
	then $(\vr, \vm, S)$ is a classical solution of the Euler system 
	in $[T_1, T_2]$, see \cite[Chapter 5, Theorem 5.7]{FeLMMiSh}.
	
	\item{\bf Weak--strong uniqueness.} 
	The dissipative solution coincides with the strong $W^{1,\infty}$ solution of the Euler system as long as the latter solution exists, \cite[Chapter 6, Theorem 6.2]{FeLMMiSh}.

\item {\bf Convexity and compactness.} 
Let 
\begin{align} \label{d25}
\mathcal{U}[\vr_0, \vm_0, S_0; \mathcal{E}_0] = 
&\left\{ (\vr, \vm, S) \ \Big|\ (\vr, \vm, S) 
\ \mbox{is a dissipative solution in}\ (0,\infty) \times 
\Omega \right. \br
&\mbox{emanating from the data}\ 
(\vr_0, \vm_0, S_0; \mathcal{E}_0) \in X_D \Big\}
\end{align}
denote the solution set associated to the data $(\vr_0, \vm_0, S_0; \mathcal{E}_0)$

Then the following holds:
\begin{itemize}
	\item The set $\mathcal{U}[\vr_0, \vm_0, S_0; \mathcal{E}_0]$ is non--empty and convex.
	\item The set $\mathcal{U}[\vr_0, \vm_0, S_0; \mathcal{E}_0]$ is closed
	and bounded  
	in the weighted Lebesgue space 
	\begin{equation} \label{d26}
	L^q_\omega ((0,\infty) \times \Omega; R^{d+2}) \ \mbox{with the measure} \ \D \omega = \exp(-t)\dt \times \dx 
	 \end{equation}
for any $1\leq q \leq \frac{2 \gamma}{\gamma + 1}$. 
\item The set $\mathcal{U}[\vr_0, \vm_0, S_0; \mathcal{E}_0]$ is compact in the space
\begin{equation} \label{d27}
L^q_\omega(0, \infty; W^{-\ell,2}(\Omega; R^{d+2}) ),\ \ell > 3,	
\end{equation}
for any $1\leq q < \infty$.		 
\end{itemize}
For the proof, see \cite[Chapter 5, Theorem 5.2]{FeLMMiSh}.
\end{enumerate}

\subsection{Dissipative vs. weak solutions}

As observed, the set of all DMV solutions $\mathcal{U}[\vr_0, \vm_0, S_0, \mathcal{E}_0]$ emanating from given initial data is convex and compact in the separable Banach space 
$L^q_\omega(0, \infty; W^{-\ell,2}(\Omega; R^{d+2}) )$, $\ell > 3, q > 1$ finite. 
Let
\[
(\vr, \vm, S) \in \mathcal{U}[\vr_0, \vm_0, S_0, \mathcal{E}_0].
\]
By virtue of Choquet's generalization 
of Krein--Milman theorem, see e.g. Phelps \cite{Phel}, there exists 
a Borel probability measure $\mu$ supported on the \emph{extremal points of the convex set}  $\mathcal{U}[\vr_0, \vm_0, S_0, \mathcal{E}_0]$ such that 
\begin{equation} \label{d28}
(\vr, \vm, S) = \int_{\mathcal{U}[\vr_0, \vm_0, S_0, \mathcal{E}_0]} (\tvr, \tvm, \widetilde{S}) \D \mu.
\end{equation}
A celebrated conjecture formulated originally by DiPerna \cite{DiP2} reads: 
\bigskip 
\hrule
\bigskip
\noindent
{\bf Open problem II (DiPerna' s conjecture on extremal points):} 

\noindent
{\it The extremal points of the set $\mathcal{U}[\vr_0, \vm_0, S_0, \mathcal{E}_0]$ are weak solutions solutions of the Euler system.
}
\bigskip 
\hrule
\bigskip

\begin{Remark} \label{Rd1}
	
Here, the class of weak solutions is understood in a slightly larger sense than in Section \ref{ws}. They satisfy
\[
\mathcal{V}_{t,x} = \delta_{[\vr(t,x), \vm(t,x), S(t,x)]},\ \mathfrak{C} = 0,
\] 	
together with the following version of the Euler system
\begin{align}
	\partial_t \vr + \Div \vm &= 0,\ \vr(0, \cdot) = \vr_0, \label{d29}\\
	\partial_t \vm + \Div \left( \frac{\vm \otimes \vm}{\vr} \right) +
	\Grad p(\vr, S) &= 0,\ \vm(0, \cdot) = \vm_0, \label{d30} \\
	\partial_t S + \Div \left( S \frac{\vm}{\vr} \right) & \geq 0,\ S(0+, \cdot) = S_0
	\label{d31}	\\
	\frac{\D }{\dt} \intO{ E(\vr, \vm, S) } &=0,\ \intO{E(\vr, \vm, S)(0+, \cdot)} = \intO{ E(\vr_0, \vm_0, S_0)}, \label{d32}
\end{align}
in the sense of distributions.	
	
\end{Remark}

\section{The second law of thermodynamics}
\label{L}

Having solve the problem of existence, albeit in a larger class of DMV solutions, we focus on the choice of a suitable 
admissibility conditions to restore uniqueness of DMV solutions for given initial data. We aim to solve the problem by imposing 
some sort of \emph{maximality} of the entropy production.

\subsection{DiPerna's maximality criterion}

In his pioneering work \cite{DiP2}, DiPerna introduced the following \emph{partial ordering} of global-in-time DMV solutions belonging to the set 
$\mathcal{U}[\vr_0, \vm_0, S_0, \mathcal{E}_0]$, namely  
\begin{align} 
(\vr^1, \vm^1, S^1, \mathcal{E}_0)  \prec_{\rm DiP} 	(\vr^2, \vm^2, S^2, \mathcal{E}_0) \ \Leftrightarrow \ 
\intO{ S^1(t-, \cdot) } \leq \intO{ S^2(t-, \cdot) }\ \mbox{for any}\ t > 0.
\label{L1}	
\end{align}

\bigskip 
\hrule
\bigskip
\noindent
{\bf Open problem III (DiPerna' s conjecture on maximal entropy production):} 

\noindent
{\it Any DMV solution in $\mathcal{U}[\vr_0, \vm_0, S_0, \mathcal{E}_0]$ that is \emph{maximal} with respect to the order relation 
$\prec_{\rm DiP}$ is a weak solution of the Euler system in the sense of \eqref{d29}--\eqref{d32}. 
}
\bigskip 
\hrule
\bigskip

\noindent
Although the \emph{existence} of $\prec_{\rm DiP}$-maximal solutions was shown in \cite{BreFeiHof19C}, DiPerna's conjecture remains open. 

Let us introduce the \emph{energy defect} 
\begin{equation} \label{L2}
D_E (\vr, \vm, S, \mathcal{E}_0)(t\pm) = \mathcal{E}_0 - \intO{ E(\vr ,\vm, S)(t\pm, \cdot) } \ \mbox{for}\ t > 0.
\end{equation}	
As pointed out in Section \ref{al}, DMV solutions with small energy defect are ``close'' to weak solutions. 
The following results can be therefore interpreted as asymptotic validity of DiPerna's conjecture for $t \to \infty$, see \cite[Proposition 3.3]{FeiLM2025II}.

\begin{Theorem}[\bf Asymptotic regularity of maximal solutions] \label{TL1}
	
	Let $(\vr, \vm, S)$ be a $\prec_{\rm DiP}$ maximal solution in the set $\mathcal{U}[\vr_0, \vm_0, S_0, \mathcal{E}_0]$. 
	
	Then 
	\[
	D_E(\vr, \vm, S)(t\pm) = \mathcal{E}_0 - \intO{ E(\vr, \vm, S)(t\pm, \cdot) } 
	\to 0 \ \mbox{as}\ t \to \infty.
	\]

\end{Theorem}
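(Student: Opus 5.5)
The argument is by contradiction. It combines three ingredients: the monotonicity of the total entropy, the existence theory for DMV solutions started at an intermediate time (the set $X_D$), and the fact that a positive energy defect can be turned into entropy production.

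\emph{Step 1: entropy is bounded and monotone.} By \eqref{d24} and the entropy inequality \eqref{d19} with $\varphi$ independent of $x$, the map $t\mapsto \intO{S(t\pm,\cdot)}$ is non-decreasing. It is also bounded above by $|\Omega|^{1/\gamma'}\Ov{S}$. Hence it has a finite limit $S_\infty$ as $t\to\infty$.

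\emph{Step 2: set up the contradiction.} Suppose the claim fails. Then there are $\delta>0$ and times $\tau_n\to\infty$ with $D_E(\vr,\vm,S)(\tau_n\pm)\geq\delta$.

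\emph{Step 3: restart at $\tau_n$ and gain entropy.} At time $\tau_n$ consider the data $(\vr,\vm,S)(\tau_n+)$. The energy compatibility gives $\intO{E(\vr,\vm,S)(\tau_n+)}\leq \mathcal{E}_0-\delta$. Now fix a constant $h\geq 0$ with
\[
\intO{ E\big(\vr(\tau_n),\vm(\tau_n),S(\tau_n+)+h\vr(\tau_n)\big)}=\mathcal{E}_0 .
\]
Such $h$ exists because $\vr e(\vr,S)$ is continuous and strictly increasing in $S$ by \eqref{d2}. Adding $h\vr$ keeps the minimum principle $S\geq\underline{s}\vr$, so the new data lie in $X_D$. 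The point is a uniform lower bound $\intO{h\vr}\geq c(\delta)>0$. This follows from the explicit form of \eqref{d1}: since $\exp(S/(c_v\vr))$ is multiplied by $\exp(h/c_v)$, the internal energy scales exactly,
\[
\intO{\vr e(\vr,S+h\vr)}=e^{h/c_v}\intO{\vr e(\vr,S)} .
\]
Together with the bounds on total mass and energy, this gives $h\geq c_v\log\big(1+\delta/\mathcal{E}_0\big)$. Because the mass $M=\intO{\vr_0}>0$ is conserved, the entropy jump is at least $hM$.

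\emph{Step 4: concatenate and contradict maximality.} By the global existence result for data in $X_D$, there is a DMV solution starting from these augmented data at $\tau_n$. Glue it to $(\vr,\vm,S)$ on $[0,\tau_n]$. The result is again a DMV solution in $\mathcal{U}[\vr_0,\vm_0,S_0,\mathcal{E}_0]$. Indeed, the entropy may jump upward at $\tau_n$ because \eqref{d19} is only an inequality with one-sided limits, and the energy compatibility \eqref{d20} stays bounded by $\mathcal{E}_0$. For the glued solution,
\[
\intO{S(t-)}\geq \intO{S(\tau_n+)}+hM \quad \mbox{for all } t>\tau_n .
\]
Choose $n$ large enough that $\intO{S(\tau_n+)}>S_\infty-hM$. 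Then the glued solution has strictly larger total entropy than $S_\infty\geq\intO{S(t-)}$ for every $t>\tau_n$, while it coincides with $(\vr,\vm,S)$ for $t\leq\tau_n$. So the glued solution is strictly above $(\vr,\vm,S)$ in the order $\prec_{\rm DiP}$, contradicting maximality.

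\emph{Main obstacle.} The delicate point is the concatenation in Step 4: the glued object must satisfy the weak formulation \eqref{d17}--\eqref{d20} across $\tau_n$, including a suitable choice of Young measure and concentration measure. The continuity of $\vr$ and $\vm$ given by \eqref{d22} handles the density and momentum equations. The entropy inequality tolerates an upward jump. The energy bound holds because the augmented data have energy exactly $\mathcal{E}_0$.
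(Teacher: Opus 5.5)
Your proposal is correct. It follows the paper's own mechanism, namely Step 1 of the proof of Theorem \ref{TS2}: raise the temperature by $e^{h/c_v}=1+\ep$ with $\ep = D_E/\intO{E_{\rm int}} \geq \delta/\mathcal{E}_0$, then concatenate. You combine this with the monotonicity and boundedness of the total entropy, so that a uniform jump $hM$ at a late restart time $\tau_n$ beats the limit $S_\infty$ and contradicts $\prec_{\rm DiP}$-maximality. The paper itself only cites this theorem from \cite{FeiLM2025II}.

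One detail to tidy up concerns the one-sided limit. Since $S(t-)\leq S(t+)$ gives $D_E(t+)\leq D_E(t-)$, the case that actually needs handling is $D_E(\tau_n-)\geq\delta$. There you should restart from $S(\tau_n-)+h\vr(\tau_n)$, which is still $\geq S(\tau_n-)$, exactly as in the concatenation axiom [A4]. After that your argument goes through verbatim, using $\intO{S(\tau_n-)}\to S_\infty$.
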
	

The next result shows that any set $\mathcal{U}[\vr_0, \vm_0, S_0, \mathcal{E}_0]$ contains a solution with arbitrarily small energy defect, 
see \cite[Theorem 7.1]{FeiLukYu}.

\begin{Theorem}[{\bf DMV solutions with small defect}] \label{TL2}
	Let the initial data $(\vr_0, \vm_0, S_0)$ satisfy 
	\[
	\mathcal{E}_0 = \intO{ E(\vr_0, \vm_0, S_0) }.
	\]
	
	Then for any $\delta > 0$, there exists a DMV solution $(\vr, \vm, S) \in  \mathcal{U}[\vr_0, \vm_0, S_0, \mathcal{E}_0]$ 
	such that
	\[
	0 \leq D_E(t) = \intO{ E(\vr_0, \vm_0, S_0) } - \intO{ E(\vr, \vm, S)(t, \cdot) } \leq \delta \ \mbox{for a.a.}\ t \in (0,\infty).	
	\]	
	
\end{Theorem}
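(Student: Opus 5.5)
We will obtain the desired solution by concatenating pieces. Each piece is a DMV solution restarted from an ``almost weak'' state, and the restarts are placed at times where the energy defect is small.

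The natural building block is Theorem \ref{TL1}, which says that a $\prec_{\rm DiP}$ maximal solution has energy defect tending to zero as $t \to \infty$. This suggests a \emph{time-rescaling/restart} construction. The Euler system is invariant under the scaling
\[
(\vr, \vm, S)(t,x) \mapsto (\vr, \lambda \vm, S)(\lambda t, x)
\]
only after a corresponding change of the velocity, so pure rescaling does not work. Instead, I would use the semigroup (restart) property of DMV solutions. If $(\vr,\vm,S)$ is a DMV solution on $[0,T]$ and $(\vr^T,\vm^T,S^T)$ is a DMV solution on $[T,\infty)$ emanating from $(\vr(T),\vm(T),S(T+))$ with energy $\mathcal{E}_0$, then the concatenation is again a DMV solution in $\mathcal{U}[\vr_0,\vm_0,S_0,\mathcal{E}_0]$. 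The weak continuity \eqref{d22}--\eqref{d24} of $\vr,\vm$ and the one-sided limits of $S$ are precisely what make the gluing of \eqref{d17}--\eqref{d19} work. The energy compatibility \eqref{d20} is pointwise in $t$, so it also glues.

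The first attempt is as follows. Near $t=0$ the defect is small by weak lower semicontinuity combined with \eqref{d11}. More precisely, since $\mathcal{E}_0 = \intO{E(\vr_0,\vm_0,S_0)}$ and $(\vr,\vm)$ is weakly continuous with $S(0+) \geq S_0$, and since $E$ is increasing in $S$ and convex, we get $\liminf_{t\to 0+} \intO{E(\vr,\vm,S)(t)} \geq \mathcal{E}_0$. Hence there exists $\tau_1>0$ with $D_E \leq \delta$ on $(0,\tau_1)$, possibly after an essential-supremum argument. Then I restart at a time $t_1 < \tau_1$ where $D_E(t_1\pm)\le\delta$. The issue is that a restart requires the new data's energy to equal the new $\mathcal{E}$. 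I would restart with energy $\mathcal{E}_0$ from the data $(\vr,\vm,S)(t_1)$, whose energy is at least $\mathcal{E}_0-\delta$. This is allowed since $X_D$ only requires $\intO{E} \leq \mathcal{E}_0$.

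The obstacle is that the defect can grow again after each restart, and iterating gives only finitely many steps. To control all $t$, I would instead choose the restart piece to be $\prec_{\rm DiP}$ maximal. By Theorem \ref{TL1} its defect tends to zero as $t\to\infty$, so it is below $\delta$ beyond some $T_\ast$. On the compact interval $[t_1, t_1+T_\ast]$ I would need a uniform smallness of the defect. To get it, I would use a continuity/stability argument in the initial energy gap. Consider the family of DMV solutions starting from data whose defect is $\eta\to 0$. By the compactness property \eqref{d27}, the limit as $\eta\to0$ is a DMV solution. I would then argue that if the defect of the limit becomes positive on a set of positive measure, one can improve entropy, contradicting maximality. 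This is the step I expect to be the hardest and the one where a genuinely new argument is needed.

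The alternative I would try first for that step is a direct construction via consistent approximations. Take the vanishing viscosity (Navier--Stokes--Fourier) approximation and stop it at the first time $\tau$ where the approximate defect exceeds $\delta/2$. At that time the Navier--Stokes--Fourier solution is smooth, so we can restart the whole approximation from its state, whose energy equals $\mathcal{E}_0$ up to $o(1)$. Since the defect of the limit is accounted for by the concentration measure and the Young measure spread, and these vanish at the restart in the sense of \eqref{d20}, each segment has defect at most $\delta$. The segment lengths are bounded below because defect growth is controlled by the energy inequality. Passing to the limit segment by segment and gluing via the restart property then yields the claimed solution.
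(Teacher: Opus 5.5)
There is a genuine gap, and it is at the restart step. Your local argument near $t=0$ is correct and is the right local ingredient: weak continuity of $\vr,\vm$, $S(0+)\ge S_0$, monotonicity of $E$ in $S$, and weak lower semicontinuity. But it only applies to data with \emph{zero} defect. Restarting from $(\vr,\vm,S)(t_1)$ with total energy $\mathcal{E}_0$ carries the defect $D_E(t_1)$ into the new piece, so the defect is never reset.

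The missing idea is to \emph{raise the entropy} at the restart time, as in Step 1 of the proof of Theorem \ref{TS2}. Replace $S(\tau+)$ by $\widehat S = S(\tau+) + c_v\vr(\tau)\log(1+\ep)$, i.e.\ $\widehat\vt=(1+\ep)\vt(\tau+)$, with $\ep = D_E(\tau+)/\intO{E_{\rm int}(\tau+,\cdot)}$. Then $\intO{E(\vr,\vm,\widehat S)(\tau,\cdot)}=\mathcal{E}_0$. This is legitimate because \eqref{d19} allows upward jumps of the entropy (cf.\ \eqref{d24}), and $\vr e(\vr,S)$ is strictly increasing in $S$ by \eqref{d2}. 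So the concatenation stays in $\mathcal{U}[\vr_0,\vm_0,S_0,\mathcal{E}_0]$ while the new piece starts with zero defect. Your $t\to 0+$ argument then gives an interval of positive length on which $D_E\le\delta$.

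The second missing ingredient is globalization. There is no lower bound on the lengths of these intervals: the energy inequality gives no modulus of continuity for $D_E$, contrary to what you assert in your last paragraph. Restart times may therefore accumulate at a finite $T^*$. This is where the non-constructive step the paper mentions (Zorn's lemma) naturally enters:
\begin{itemize}
\item Order solutions defined on $[0,T)$ with $D_E\le\delta$ a.e.\ by extension.
\item A chain has its union as an upper bound. The state at $T^*=\sup T$ is defined through the weak continuity \eqref{d22} of $\vr,\vm$, the one-sided limit $S(T^*-)$, and lower semicontinuity of the energy.
\item A maximal element with $T<\infty$ could be prolonged by the entropy-raising restart, so $T=\infty$.
\end{itemize}

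Your two substitutes do not fill the gap. Theorem \ref{TL1} only controls $D_E$ as $t\to\infty$. The step you flag as hardest is that a positive defect of a $\prec_{\rm DiP}$-maximal solution could be removed by improving the entropy, contradicting maximality. If that worked, it would show that $\prec_{\rm DiP}$-maximal solutions from zero-defect data are weak solutions in the sense of \eqref{d29}--\eqref{d32}, i.e.\ it would settle Open problem III. In fact, raising the entropy at one instant does not produce a $\prec_{\rm DiP}$-larger solution, since the continuation need not have larger total entropy at all later times.

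The Navier--Stokes--Fourier route fails for a different reason. The approximate solutions (essentially) conserve total energy, so there is no \emph{approximate defect} to monitor. The defect of the limit is created only by oscillations or concentrations surviving the weak limit. Restarting each approximation from its own state at time $\tau$ does not remove the oscillations or concentrations already present in $(\vre,\vme,S_\ep)(\tau,\cdot)$. Resetting the defect requires restarting from the limit state, which again needs the entropy-raising step above.
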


We point out that the proof of Theorem \ref{TL2} is non--constructive based on Zorn's lemma (Axiom of Choice). The possibility of effectively computing 
such a solution remains dubious.

\subsection{Dafermos' maximality criterion} 

It follows from the entropy inequality \eqref{d19} that the total entropy
\[
t \in [0, \infty) \to \intO{ S(t, \cdot) }
\]
can be identified with a non--decreasing (c\` agl\` ad) function, with $S(0-, \cdot) = S_0$. In particular,
\begin{align}
	0 \leq \frac{\D^+}{\dt} \intO{ S(t, \cdot) } &\leq \infty
	\ \mbox{for any}\ t \geq 0,\br
	\frac{\D^+}{\dt} \intO{ S (t, \cdot)} &= \frac{\D }{\dt} \intO{S(t, \cdot)} < \infty \ \mbox{for a.a.}\ t \in (0, \infty).
	\label{L3}
\end{align}	

Dafermos \cite{Dafer} proposed a refined admissibility criterion based on an order relation $\prec_{\rm Daf} $
between two solutions of the Euler system :
\begin{align}
	(\vr^1, \vm^1, S^1, \mathcal{E}_0) &\prec_{\rm Daf}
	(\vr^2, \vm^2, S^2)\br \ &\Leftrightarrow \br
	\mbox{there exists}\ \tau \geq 0 \ \mbox{such that}\ (\vr^1, \vm^1, S^1)(t, \cdot) &=
	(\vr^2, \vm^2, S^2)(t, \cdot) \ \mbox{for all}\
	t \leq \tau, \br
	\frac{\D^+}{\dt} \intO{ S^2 (\tau, \cdot) } &>
	\frac{\D^+}{\dt}  \intO{ S^1 (\tau, \cdot) }.
	\label{L4}
\end{align}	

In contrast with DiPerna's criterion based on \eqref{L1}, Dafermos' admissibility criterion is of local character, in particular, 
it is independent of the behaviour of the solutions after the time $\tau$. We report the following result, see \cite[Theorem 3.2]{FeiLukYu}. 

\begin{Theorem}[{\bf Regularity of maximal DMV solutions}] \label{TL3}
	
	Under the hypotheses of Theorem \ref{TL2}, any $\prec_{\rm Daf}$ - maximal DMV solution in 
	$\mathcal{U}[\vr_0, \vm_0, S_0, \mathcal{E}_0]$
	is a weak solution of the Euler system in the sense of \eqref{d29}--\eqref{d32}.
	
\end{Theorem}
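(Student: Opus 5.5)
We argue by contradiction. Let $(\vr, \vm, S)$ be a $\prec_{\rm Daf}$-maximal DMV solution and suppose it is not a weak solution in the sense of \eqref{d29}--\eqref{d32}. The idea is to locate the first time at which the solution ``goes turbulent'', then build a competitor that agrees with $(\vr,\vm,S)$ up to that time and produces entropy strictly faster immediately afterwards. This contradicts maximality \eqref{L4}.

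\emph{Step 1: reduce to energy defect.} We first show that a DMV solution with $D_E \equiv 0$ is necessarily a weak solution. If $\intO{E(\vr,\vm,S)(t,\cdot)} = \mathcal{E}_0$ for a.a. $t$, then the energy compatibility \eqref{d20}, combined with Jensen's inequality for the strictly convex $E$, forces two things: $\mathfrak{C} = 0$, and $\mathcal{V}_{t,x}$ equal to the Dirac mass at $(\vr,\vm,S)(t,x)$ on the set where $E$ is finite. Then \eqref{d17}--\eqref{d19} reduce to \eqref{d29}--\eqref{d31}, and \eqref{d32} holds since the energy is constant. Hence non-weakness means the defect is nonzero. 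Set
\[
\tau = \inf\{ t \geq 0 : D_E(t\pm) > 0 \}.
\]
On $[0,\tau]$ the solution is a weak solution. Moreover, at time $\tau$ the data $(\vr,\vm,S)(\tau+)$ satisfy $\intO{E} = \mathcal{E}_0$, using weak continuity \eqref{d22}--\eqref{d24} and lower semicontinuity. There is therefore a sequence $t_n \downarrow \tau$, or $\tau$ itself, at which the defect is positive.

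\emph{Step 2: the competitor.} Restart at time $\tau$ from $(\vr,\vm,S)(\tau+)$ with energy $\mathcal{E}_0$. The hypotheses of Theorem \ref{TL2} now hold for these data, so for any $\delta > 0$ Theorem \ref{TL2} provides a DMV solution $(\vr^\delta,\vm^\delta,S^\delta)$ with defect at most $\delta$. Concatenating it with $(\vr,\vm,S)|_{[0,\tau]}$ gives an element of $\mathcal{U}[\vr_0,\vm_0,S_0,\mathcal{E}_0]$; checking this is routine, since the weak formulations glue at $\tau$ by the one-sided limits \eqref{d23} and $S(\tau-) \leq S(\tau+)$. The key quantitative tool is a relation between energy defect and total entropy. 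By \eqref{d2}, $\partial_S(\vr e) = \vt > 0$, so raising $S$ at fixed $(\vr,\vm)$ raises the energy. Consequently a solution with a large energy defect has ``unused'' energy that could have been converted into entropy. Conversely, in a small-defect solution almost all of the energy is stored in $(\vr,\vm,S)$.

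\emph{Step 3 (main obstacle): comparing entropy production rates.} We must show that
\[
\frac{\D^+}{\dt}\intO{S^\delta(\tau,\cdot)} > \frac{\D^+}{\dt}\intO{S(\tau,\cdot)}
\]
for a suitable competitor. A defect alone does not produce entropy, so this needs more than Step 2. First we would try to modify the competitor by dumping the turbulent energy of $(\vr,\vm,S)$ into entropy. Concretely, on a short interval $(\tau,\tau+h)$ we would replace $S$ by $S + \lambda(t)\vr$. Here $\lambda$ is chosen increasing and such that $\intO{E}$ with the new entropy equals the available energy $\mathcal{E}_0 - r\,{\rm trace}\,\mathfrak{C}$, while the Reynolds concentration is correspondingly reduced. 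Adding $\lambda(t)\vr$ keeps \eqref{d19} valid because $\partial_t(\lambda\vr) + \Div(\lambda\vm) = \lambda'\vr \geq 0$ by \eqref{d17}. The delicate point is that the momentum equation involves $p(\vr,S)$, which changes. The change must be absorbed by reducing $\mathfrak{C}$, and this requires the trace bound in \eqref{d20} with the constant $r(d,\gamma)$. If this works, the total entropy gains $\intO{\lambda'(t)\vr}$. Taking $\lambda' \to \infty$ near $\tau^+$ (as the defect permits) produces a strictly larger right derivative, including the case where $\D^+/\dt$ of the original is finite. If direct absorption fails, the fallback is to argue by one-sided derivatives: use Theorem \ref{TL2} to obtain a competitor whose defect is much smaller than that of $(\vr,\vm,S)$ on $(\tau,\tau+h)$, and convert the energy difference into an entropy difference via the monotonicity \eqref{d2}.
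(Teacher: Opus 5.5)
Your Step 1 is correct and is the right reduction. The proof breaks down in Step 3, exactly where you placed the ``main obstacle'': it never produces a competitor whose total entropy has a strictly larger right derivative at $\tau$.

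Take your modification $S \mapsto S + \lambda(t)\vr$ of the \emph{existing} DMV solution on $(\tau,\tau+h)$, with $\widetilde S \mapsto \widetilde S + \lambda \tvr$ inside $\mathcal{V}_{t,x}$. It multiplies the pressure by $e^{\lambda/c_v}$. To keep \eqref{d18}, the new concentration measure $\mathfrak{C} - (e^{\lambda/c_v}-1)\int \mathds{1}_{\tvr>0}\, p(\tvr,\widetilde S)\,\D\mathcal{V}_{t,x}\,\mathbb{I}$ would have to stay positive semidefinite. That is an isotropic \emph{lower} bound on $\mathfrak{C}$, and nothing provides it. The inequality \eqref{d20} bounds ${\rm trace}[\mathfrak{C}]$ only from above. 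Moreover, the defect may be carried entirely by oscillations of $\mathcal{V}_{t,x}$, or by a plain energy loss, with $\mathfrak{C}=0$.

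The fallback fails too. An energy gap between two solutions with different $(\vr,\vm)$ yields no entropy comparison. The small-defect competitor of Step 2 says nothing about entropy production at $\tau$.

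The choice $\tau=\inf\{t: D_E(t\pm)>0\}$ is also a poor one. Lower semicontinuity only gives the trivial bound $\intO{E}\le\mathcal{E}_0$, not equality. And at that instant the original total entropy may jump, making its right derivative $+\infty$, so no competitor can beat it there.

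The missing idea is that your shift is the right one, but it must be applied once, to the data at a single instant, followed by a restart. With $\lambda=c_v\log(1+\ep)$ it is precisely the temperature rescaling $\vt\mapsto(1+\ep)\vt$ used in Step 1 of the proof of Theorem \ref{TS2}. The corrected argument runs as follows:
\begin{enumerate}
\item If the solution is not weak, $\{t : D_E(t)>0\}$ has positive measure. By \eqref{L3}, pick $\tau$ in this set at which $t\mapsto\intO{S(t,\cdot)}$ has a finite right derivative. Then there is no entropy jump at $\tau$, so $S(\tau-)=S(\tau+)$.
\item Set $\widehat S = S(\tau+) + c_v\log(1+\ep)\,\vr(\tau)$ with $\ep$ as in \eqref{s13}. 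Then $(\vr(\tau),\vm(\tau),\widehat S,\mathcal{E}_0)\in X_D$ and its energy is exactly $\mathcal{E}_0$.
\item Take any DMV solution emanating from these data, which exists by global existence, and concatenate it at $\tau$. The old $\mathcal{V}_{t,x}$ and $\mathfrak{C}$ play no role after $\tau$, so nothing has to be absorbed. The entropy inequality tolerates the upward jump.
\item The competitor coincides with $(\vr,\vm,S)$ on $[0,\tau]$ in the c\`agl\`ad sense. Its total entropy jumps at $\tau$ by at least $c_v M_0\log(1+\ep)>0$, so its right derivative there is $+\infty$. This contradicts $\prec_{\rm Daf}$-maximality.
\end{enumerate}
Hence $D_E=0$ a.e., and your Step 1 concludes the proof.
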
	

The fact that a criterion based on maximality of the entropy production selects a weak solution of the Euler system seems quite surprising. 
Unfortunately, the \emph{existence} of a $\prec_{\rm Daf}$ - maximal solution is an open problem. 

\bigskip 
\hrule
\bigskip
\noindent
{\bf Open problem IV (Existence of maximal solutions in the sense of Dafermos):} 

\noindent
{\it Does the Euler system admit a $\prec_{\rm Daf}$ - maximal DMV solution for any finite energy initial data?
}
\bigskip 
\hrule
\bigskip

The answer to Open problem IV may be negative as the following result suggests, see \cite[Corollary 3.3]{FeiLukYu}. 

\begin{Theorem} \label{TL4}
	
Let the initial date be as in Theorem \ref{TL2}.	

Then either the Euler system admits a unique weak solution in the sense specified in \eqref{d29}--\eqref{d32},  
or there
is an infinite sequence of DMV solutions such that 
\[
(\vr^n, \vm^n, S^n) \prec_{\rm Daf} (\vr^{n+1}, \vm^{n+1}, S^{n+1}) ,\ n = 1,2,\dots
\]	

\end{Theorem}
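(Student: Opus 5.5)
The plan is to argue by dichotomy, relying on Theorem \ref{TL3} together with the global existence and the gluing of DMV solutions.

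First, suppose there is no infinite chain. I will show that a $\prec_{\rm Daf}$-maximal DMV solution then exists: start from any DMV solution $(\vr^1,\vm^1,S^1)$, which exists by the global existence result. If it is not maximal, pick $(\vr^2,\vm^2,S^2)$ with $(\vr^1,\vm^1,S^1) \prec_{\rm Daf} (\vr^2,\vm^2,S^2)$, and iterate. If no infinite sequence exists, this process stops after finitely many steps at a maximal element. By Theorem \ref{TL3}, that maximal element is a weak solution in the sense of \eqref{d29}--\eqref{d32}.

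Second, in this case I must show the weak solution is unique. Suppose $(\vr,\vm,S)$ and $(\vr',\vm',S')$ are two distinct weak solutions, both with zero energy defect. Since $\mathcal{E}_0 = \intO{E(\vr_0,\vm_0,S_0)}$, the energy balance and the strict monotonicity \eqref{d2} of $E$ in $S$ constrain the entropy. Let $\tau$ be the first time at which they differ. Concatenating, i.e.\ following one solution up to $\tau$ and the other afterwards, yields DMV solutions; this uses that the DMV class is closed under concatenation at times where the states agree. The step I expect to be hardest is then to produce a solution with strictly larger right derivative of total entropy at $\tau$. The idea is to restart at $\tau$ with a DMV solution from the data at $\tau$ but with a \emph{smaller} energy, using the nonempty convex set $X_D$ and Theorem \ref{TL2}. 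Lowering the internal energy contribution raises the admissible entropy production rate, which gives a strict increase of $\frac{\D^+}{\dt}\intO{S}$ at $\tau$. This strict increase contradicts maximality of a weak solution unless the branch point does not exist.

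Third, suppose instead the weak solution is not unique, or no maximal solution exists. The mechanism of the second step, where entropy is increased strictly at a branching time by restarting with a dissipative DMV solution, can be iterated starting from any solution. Since no iterate is maximal, this produces an infinite $\prec_{\rm Daf}$-increasing sequence.

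The delicate point throughout is the strict increase of the entropy derivative. I would try to obtain it by exploiting that weak solutions conserve energy exactly, so any DMV branch with positive defect converts energy into entropy via Gibbs' relation \eqref{i5}, $\vt\,\D s = \D e$ at fixed density.
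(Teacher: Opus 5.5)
Your first step is correct: if no infinite chain exists, iterating non-maximality from any DMV solution must stop at a $\prec_{\rm Daf}$-maximal element, and that element is a weak solution by Theorem \ref{TL3}. Likewise, if no maximal element exists, the same iteration gives an infinite chain. The gap is in the uniqueness part, namely your second step and the non-uniqueness case of your third step.

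The logic does not close. Theorem \ref{TL3} gives maximal $\Rightarrow$ weak, not weak $\Rightarrow$ maximal. So finding one solution above a weak solution contradicts nothing. Under the hypothesis ``no infinite chain'' you must actually produce infinitely many strictly increasing solutions. Your claim that ``no iterate is maximal'' is unjustified, since the iteration may well end at a maximal (weak) solution.

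The proposed mechanism also runs the wrong way. At a branching time $\tau$ of two weak solutions the energy defect is zero. By \eqref{d2}, $\partial(\vr e)/\partial S = \vt > 0$, so any restart from $(\vr, \vm, \widehat S)(\tau)$ with $\widehat S \geq S(\tau)$, $\widehat S \not\equiv S(\tau)$, has energy exceeding $\mathcal{E}_0$ and leaves $X_D$. Lowering the internal energy at fixed density instead lowers the entropy, i.e. it needs a downward entropy jump, which the entropy inequality forbids. Theorem \ref{TL2} does not help here. Moreover, two weak solutions branching at $\tau$ may have the same right derivative of total entropy there.

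The missing idea is to use convexity of the solution set to produce a DMV solution with positive defect. That defect is then converted into entropy jumps at a decreasing sequence of times.

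\begin{itemize}
\item \emph{A solution with positive defect.} If $W_1 \neq W_2$ are weak solutions, then $C = \frac{1}{2}(W_1 + W_2) \in \mathcal{U}[\vr_0, \vm_0, S_0, \mathcal{E}_0]$. Both $W_i$ conserve energy, so strict convexity of $E$ gives $D_E^C(t) > 0$ on a set $A$ of times of positive measure.
\item \emph{The jump construction.} For a.a. $t \in A$, the map $s \mapsto \intO{S_C(s, \cdot)}$ is continuous at $t$ with finite right derivative by \eqref{L3}. At such $t$, do the construction of Step 1 in the proof of Theorem \ref{TS2}: set $\widehat S = S_C(t+) + c_v \vr_C(t) \log(1 + \ep)$ with $\ep$ from \eqref{s13}. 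Then the data $(\vr_C, \vm_C, \widehat S)(t)$ have energy exactly $\mathcal{E}_0$. Concatenate with a DMV solution issuing from these data; the upward entropy jump is compatible with \eqref{d19}.
\item \emph{The chain.} The resulting $C^t$ coincides with $C$ on $[0,t]$ and has $\frac{\D^+}{\dt} \intO{S} = + \infty$ at $t$. Take good times $t_1 > t_2 > \dots$ in $A$. Then $C^{t_n}$ and $C^{t_{n+1}}$ agree on $[0, t_{n+1}]$. At $t_{n+1}$, $C^{t_n}$ still has the finite right derivative of $C$, while $C^{t_{n+1}}$ has $+\infty$. Hence $C^{t_n} \prec_{\rm Daf} C^{t_{n+1}}$ for all $n$. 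The times must decrease; with increasing times the order reverses.
\end{itemize}

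In fact this shows that any DMV solution with positive defect on a set of times of positive measure generates an infinite chain. That gives the dichotomy directly.
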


\section{Solution semigroup}
\label{S}

Our ultimate goal is to identify a \emph{solution semigroup} for the Euler system based on DMV solutions. First, we recall the definition of the 
\emph{phase space} (formerly data space): 
\begin{align} \
	X_D &= \left\{ (\vr, \vm, S, \mathcal{E}_0)     \Big| 
	(\vr, \vm, S) \ \mbox{measurable in}\ \Omega,\ 
	S \geq \underline{s} \vr ,\ 
	\intO{ E(\vr, \vm, S) } \leq \mathcal{E}_0 \right\} 
	\br &\subset W^{-\ell,2}(\Omega; \R^{d+2}) \times [0, \infty), \ \ell > d.
	\label{S1}
\end{align}
Given $\underline {s} \in \R$, the phase space $X_D$ is a non--empty convex compact subset of the Hilbert space
$W^{-\ell,2}(\Omega; \R^{d+2}) \times \R$. 

A solution semigroup is a family of mappings 
\[
\vU:  \Big( t, [\vr_0, \vm_0, S_0, \mathcal{E}_0] \Big) \in [0, \infty) \times X_D \mapsto X_D 
\]
enjoying the following properties:
\begin{itemize}
	\item
\begin{equation}\label{S2}
\vU \Big(0 , 	[\vr_0, \vm_0, S_0, \mathcal{E}_0] \Big) = [\vr_0, \vm_0, S_0, \mathcal{E}_0];
\end{equation}
\item
{\bf semigroup property:} 
\begin{equation} \label{S3}
\vU \Big(t+s, 	[\vr_0, \vm_0, S_0, \mathcal{E}_0] \Big) = \vU \Big( t, \vU \Big(s, 	[\vr_0, \vm_0, S_0, \mathcal{E}_0] \Big) \Big)
\end{equation}	
for any $t,s \geq 0$;
\item
{\bf problem compatibility:}
\begin{equation} \label{S4}
t \in [0, \infty) \mapsto \vU \Big( t, [\vr_0, \vm_0, S_0, \mathcal{E}_0] \Big) \in 
\mathcal{U} [\vr_0, \vm_0, S_0, \mathcal{E}_0] \times \{ \mathcal{E}_0 \}
\end{equation}
\item
{\bf measurability:}
\begin{equation} \label{S5}
\mbox{the mapping}\	[\vr_0, \vm_0, S_0, \mathcal{E}_0] \in X_D \mapsto \vU \Big( t, [\vr_0, \vm_0, S_0, \mathcal{E}_0] \Big) \in X_D
\end{equation}
is Borel measurable for any fixed $t \geq 0$.	
\end{itemize}

In addition, keeping in mind the interpretation of limits of consistent approximations as inertial range of turbulence we require 
\begin{equation}\label{S6}
D_E (\vr, \vm, S, \mathcal{E}_0) (t \pm ) = \mathcal{E}_0 - \intO{ E  (\vr, \vm, S)(t\pm, \cdot)  } \to 0 \ \mbox{as}\ t \to \infty
\end{equation}
whenever $\Big[ (\vr, \vm, S)(t, \cdot), \mathcal{E}_0 \Big] = \vU \Big( t, [\vr_0, \vm_0, S_0, \mathcal{E}_0] \Big)  $, $t \geq 0$. 
The physical interpretation of \eqref{S6} is vanishing of the turbulent energy for large times.
We recall that 
property \eqref{S6} is automatically satisfied if the semigroup solution if $\prec_{\rm DiP}-$maximal.

Unfortunately, we are not able to guarantee the selected semigroup to be \emph{continuous} with respect to the initial data. 
However, in view of the Borel measurability property \eqref{S5}, 
we may use an abstract version of Egorov (Lusin) theorem 
(cf. Wisniewski \cite{Wis}) to deduce the following conclusion:

Let $\mu$ be a complete Borel probability measure on $X_D$. Then for any $\ep > 0$, $\tau > 0$ there exists a closed set $K \subset X_D$, $\mu[K] > 1-\ep$ such that 
\[
\vU (\tau, \cdot)|K \to X_D \to X_D
\]
is continuous. As a matter of fact, Wisniewski \cite{Wis} proves a stronger result, namely there is a sequence of \emph{continuous} mappings 
$(\vU_\ep (\tau, \cdot))_{\ep > 0}$ such that
\[
\vU_\ep (\tau, \cdot) \to \vU(\tau, \cdot) \quad  \mu - \mbox{almost surely.}
\]

Although the definition of DMV solution involves {\it a priori} undetermined quantities, specifically the parametrized measure $\mathcal{V}_{t,x}$ 
and the concentration measure $\mathfrak{C}$, the semigroup solution depends only on the 
data $(\vr_0, \vm_0, S_0)$ and the (constant) value of the total energy $\mathcal{E}_0$. 
As the DMV solutions are in general not uniquely determined by the data, the semi-group solution must be obtained by applying a proper \emph{selection} 
process.
In the remaining part of this section, we discuss several selection processes that give rise to a semigroup solution 
enjoying  the properties \eqref{S2}--\eqref{S6}.

\subsection{A selection process by Krylov, Cardona, and Kapitanski}

We start by specifying the topology on the solution space $\mathcal{U}[\vr_0, \vm_0, S_0, \mathcal{E}_0]$. There are two natural choices:
\begin{itemize} 
\item
STRONG topology 
\[
X_{\rm STRONG} = L^q((0, \infty) \times \Omega; \R^{d + 2}),\ \D \omega = \exp(-t) \dt \times \dx,\ 1 < q \leq \frac{2 \gamma}{\gamma + 1};
\]
\item
WEAK topology represented by the weighted Hilbert space 
\[
X_{\rm WEAK} = L^2_\omega(0,\infty; W^{-\ell,2}(\Omega; \R^{d+2})),\ \ell > d;
\] 
\end{itemize}
cf. \eqref{d26}, \eqref{d27}. 

The selection strategy, proposed in the context of stochastic equations by Krylov \cite{KrylNV} and later adapted to the deterministic 
setting by Cardona and Kapitanski \cite{CorKap}, is based a set of axioms to be satisfied by a multivalued solution set 
$\mathcal{U}[\vr_0, \vm_0, S_0, \mathcal{E}_0]$, cf. \cite{BreFeiHof19C}.

\begin{itemize}
\item {\bf [A1] Compactness.} The solution set $\mathcal{U}[\vr_0, \vm_0, S_0, \mathcal{E}_0]$ is a non--empty
\emph{compact} subset of the separable Hilbert space $X_{\rm WEAK}$, cf. Section \ref{bps}, notably \eqref{d27}.  

\item {\bf [A2] Measurability.} The mapping 
\[
[\vr_0, \vm_0, S_0, \mathcal{E}_0] \in X_D \mapsto \mathcal{U}[\vr_0, \vm_0, S_0; \mathcal{E}_0] \in  {\rm comp} \Big[ X_{\rm WEAK} \Big]
\]
is Borel measurable, where ${\rm comp}$ denotes the metric space of all compact subsets of the Hilbert space $X_{\rm WEAK}$
endowed with the Hausdorff topology, see \cite[Section 4, Lemma 4.5]{BreFeiHof19C}.

\item {\bf [A3] Time--shift.} For any $T \geq 0$ and any
$(\vr, \vm, S) \in X_{\rm WEAK}$, we define
\[
\mathcal{S}_T[ 	\vr, \vm, S ](t, \cdot) =
(\vr (t + T), \vm (t + T), S (t + T) ),\ t \geq 0.
\]
We require for any
\[
(\vr, \vm, S) \in \mathcal{U}[\vr_0, \vm_0, S_0, \mathcal{E}_0]
\]
to satisfy 
\[
\mathcal{S}_T [\vr, \vm, S, \mathcal{E}]
\in \mathcal{U}[ \vr(T, \cdot),
\vm (T, \cdot), S(T-, \cdot), \mathcal{E}_0    ]
\]
for any $T \geq 0$, see \cite{BreFeiHof19C}. 

\item {\bf [A4] Concatenation.}
For
\[
(\vr^i, \vm^i, S^i) \in X_{\rm WEAK}, \ i=1,2,\ T > 0,
\]
we define
\[
(\vr^1, \vm^1, S^1) \cup_T
(\vr^2, \vm^2, S^2) = (\vr, \vm, S),
\]
\[
(\vr, \vm, S) = \left\{
\begin{array}{l}
	(\vr^1, \vm^1, S^1) (t, \cdot) \ \mbox{for}\ 0 \leq t \leq T, \\ \\
	(\vr^2, \vm^2, S^2) (t - T, \cdot) \ \mbox{for}\ t > T	
\end{array} \right.
\]

For any
\[
(\vr^1, \vm^1, S^1) \in \mathcal{U}[\vr_0, \vm_0, S_0, \mathcal{E}_0]
\]
and
\[
(\vr^2, \vm^2, S^2) \in \mathcal{U}[\vr^1(T, \cdot), \vm^1(T, \cdot), S^1(T-, \cdot), \mathcal{E}_0], 
\]
we require
\[
(\vr^1, \vm^1, S^1) \cup_T (\vr^2, \vm^2, S^2) \in
\mathcal{U}[\vr_0, \vm_0, S_0, \mathcal{E}_0],
\]
see \cite[Lemma 5.1]{FeiLukYu}.

\end{itemize}

If axioms [A1]--[A4] are satisfied by a multivalued solution mapping, the selection procedure consists of a \emph{successive} minimization of 
a countable family of cost functionals
\begin{equation} \label{S7}
\mathcal{F}_{n,m} (\vr, \vm, S) = \int_0^\infty \exp( - \lambda_n t) \left[ G_m \Big( (\vr, \vm, S)(t, \cdot) \Big) \right]    \dt, 
\end{equation}	
where $(\lambda_n)_{n=1}^\infty$ is a dense set in $(0, \infty)$, and $(G_m)_{m=1}^\infty$ a family of continuous (bounded) functionals separating 
points in $X_D$. Specifically, given $(n,m)$, we define 
a restricted solution set 
\begin{align} 
\mathcal{U}_{n,m} \circ \mathcal{U} = &\left\{ (\tvr, \tvm, \widetilde{S}) \in \mathcal{U}[\vr_0, \vm_0, S_0, \mathcal{E}_0] \ \Big|\ 
\mathcal{F}_{n,m} (\tvr, \tvm, \widetilde{S}) \leq \mathcal{F}_{n,m} (\vr, \vm, {S})
 \right. \br
&\quad \mbox{for all}\ (\vr, \vm, {S})  \in \mathcal{U}[\vr_0, \vm_0, S_0, \mathcal{E}_0] \Big\}
\label{S8} 
\end{align}

It is possible to show that the restricted solution set $\mathcal{U}_{n,m} \circ \mathcal{U}$ complies with the axioms [A1]--[A4]. Consequently, applying this 
minimization process successively for a dense family $(n_k, m_k)_{k=1}^\infty$, 
\[
\mathcal{U}_\infty [\vr_0, \vm_0, S_0, \mathcal{E}_0] = \cap_{k \geq 1} \mathcal{U}_{n_{k+1}, m_{k+1}} \circ \mathcal{U}_{n_k, m_k} \circ \dots \circ 
\mathcal{U}_{n_{1}, m_{1}} \circ \mathcal{U}_{n_k, m_k} \mathcal{U} [\vr_0, \vm_0, S_0, \mathcal{E}_0]
\]
the resulting set is a singleton yielding the desired solution semigroup. 

The above delineated procedure has been successfully applied 
in \cite{BreFeiHof19C}. The main shortcoming of the process is the fact that the selected solution may sensitively depend on the choice/order 
of the sequence $\lambda_n$ as well as the functionals $G_m$. Reducing the number of cost functionals may not only simplify the selection 
procedure but also facilitate its eventual numerical implementation. 

\subsection{Two step selection procedure based on convexity}
\label{tss}

In \cite{FeiLM2025II}, we proposed a simplified two--step selection process based on \emph{convexity} of the solution set 
$\mathcal{U}[\vr_0, \vm_0, S_0, \mathcal{E}_0]$. 

\subsubsection{First selection}

Following the strategy of \cite{BreFeiHof19C} we introduce the 
\emph{first selection functional} 
\[
\mathcal{F}_S (\vr, \vm, S) = \int_0^\infty  \exp(-t) \left( \intO{ S(t, \cdot) } \right) 
\dt 
\]
which is a bounded linear form defined on the solution set 
$\mathcal{U}[\vr_0, \vm_0, S_0, \mathcal{E}_0]$ endowed with the topology of the Banach space $X_{\rm STRONG}$.

Similarly to the preceding section, we set
\begin{align}
	\mathcal{U}_S[ \vr_0, \vm_0, S_0, \mathcal{E}_0] &\equiv	{\rm argmax} \mathcal{F}_S \Big[ \mathcal{U}[\vr_0, \vm_0, S_0; \mathcal{E}_0] \Big]\br &= \left\{ (\vr, \vm, S) \in \mathcal{U}[\vr_0, \vm_0, S_0; \mathcal{E}_0]\ \Big|\ \mathcal{F}_S (\vr, \vm, S) \geq 
	\mathcal{F}_S (\tvr, \tvm, \widetilde{S})  \right. \br 
	&\quad \quad \mbox{for any}\ (\tvr, \tvm, \widetilde{S}) \in 
	\mathcal{U}[\vr_0, \vm_0, S_0; \mathcal{E}_0]    \Big\}. 
	\label{S9}
\end{align}

As shown in \cite{FeiLM2025II}, the restricted solution set $\mathcal{U}_S[ \vr_0, \vm_0, S_0; \mathcal{E}_0]$ enjoys the following properties:
\begin{itemize}
\item 
{\bf Convexity.} The set $\mathcal{U}_S[ \vr_0, \vm_0, S_0; \mathcal{E}_0]$ is a nonempty convex set, compact in $X_{\rm WEAK}$ and closed 
in $X_{\rm STRONG}$. 
\item 
{\bf Measurability.} The set valued mapping 
\begin{equation} \label{S10}
	\mathcal{U}_S: (\vr_0, \vm_0, S_0; \mathcal{E}_0) \in X_D \mapsto 
	{\rm argmax}\, \mathcal{F}_S \Big[ \mathcal{U}[\vr_0, \vm_0, S_0; \mathcal{E}_0] \Big] \in {\rm comp} \Big[ X_{\rm WEAK} \Big]
\end{equation}	 
is Borel--measurable. Moreover, the same mapping 
\begin{align} 
	\mathcal{U}_S: (\vr_0, \vm_0, S_0; \mathcal{E}_0) \in X_D \mapsto 
	{\rm argmax}\, \mathcal{F}_S \Big[ \mathcal{U}[\vr_0, \vm_0, S_0; \mathcal{E}_0] \Big] &\in {\rm closed} \Big[ X_{\rm STRONG} \Big]
	\label{S11} 
\end{align}	
is Borel measurable with respect to the metrisable Wijsman topology on closed convex subsets of $X_{\rm STRONG}$. We recall
the convergence in the Wijsman topology: 
\[
	\mathcal{A}_n \toW \mathcal{A} \ \Leftrightarrow\
	{\rm dist}_{L^q_\omega}[y, \mathcal{A}_n] \to {\rm dist}_{L^q_\omega}[y, \mathcal{A}] \ \mbox{for any}\ y \in L^q_\omega(0,\infty; L^q(\Omega; \R^{d+2}) .
\]
\item 
{\bf Maximal dissipation rate.} All solutions belonging to the set 
$\mathcal{U}_S[ \vr_0, \vm_0, S_0, \mathcal{E}_0]$ are maximal with respect to the partial ordering $\prec_{\rm DiP}$. In particular, they satisfy \eqref{S6}.
\end{itemize}

\subsubsection{Second selection}

If the set $\mathcal{U}_S[\vr_0, \vm_0, S_0, \mathcal{E}_0]$ fails to be a singleton, 
we propose the second selection based on maximisation of the energy defect (the so--called turbulent energy). More specifically, we consider the functional
\[
\mathcal{F}_E(\vr, \vm, S) = \int_0^\infty \exp(-t) \left( \intO{ 
	E(\vr, \vm, S) (t, \cdot)}	\right) \dt.
\]
The selection procedure is then closed by setting 
\[
\mathcal{U}_{ES}(\vr_0, \vm_0, S_0, \mathcal{E}_0) = 
{\rm argmin}\, \mathcal{F}_E \Big[ \mathcal{U}_S[\vr_0, \vm_0, S_0; \mathcal{E}_0] \Big]. 
\]
As the energy functional $E$ is strictly convex in its domain, 
$\mathcal{U}_{ES}(\vr_0, \vm_0, S_0; \mathcal{E}_0)$ is a singleton, which finishes the selection process.
Intuitively, the second choice prefers ``turbulent'' solutions. A similar selection process was proposed by 
Klingenberg, Markefelder, and Wiedemann \cite{KlMaWi}.

As shown in \cite{FeiLM2025II}, the mapping 
\begin{align} 
	\mathcal{U}_{ES} : (\vr_0, \vm_0, S_0, \mathcal{E}_0) \in X_D \mapsto 
	{\rm argmin}\, \mathcal{F}_E \Big[ \mathcal{U}_S[\vr_0, \vm_0, S_0; \mathcal{E}_0]     \Big]	
	\in X_{\rm STRONG}
	\label{S12}
\end{align}
is Borel measurable. This observation is based on the construction 
of a Moreau--Yosida regularisation of the 
functional $\mathcal{F}_E$ on the Banach space 
$X_{\rm STRONG}$ proposed by 
Bacho \cite{Bacho}:
\begin{align} 
	\mathcal{F}^\ep_E &(\vr, \vm, S ) \br &= 
	\inf_{(\tvr, \tvm, \widetilde{S}) \in L^q_{\omega}((0, \infty); L^q(\Omega; \R^{d+2}))  } \left[ \frac{1}{\ep} 
	\| (\vr, \vm, S ) - (\tvr, \tvm, \widetilde{S}) \|_{L^q_{\omega}((0, \infty); L^q(\Omega; \R^{d+2})  }^q + 
	\mathcal{F}_E (\tvr, \tvm, \widetilde{S})) \right]
	\label{S13}	
\end{align}
for $\ep > 0$, see \cite{FeiLM2025II} for details. 

Finally, it is easy to see that the instantaneous values of dissipative solution belong to the set $X_D$, 
\[
\Big[ \mathcal{U}_{ES}(\vr_0, \vm_0, S_0; \mathcal{E}_0); \mathcal{E}_0 \Big] (\tau-, \cdot) \in X_D \ \ \mbox{for any}\ \tau > 0.
\]
Moreover, the entropy $S$ can be identified with a c\` agl\` ad mapping ranging in the space $W^{-\ell, 2}(\Omega)$. In particular, 
the entropy $S$ belongs to the Skorokhod space of  c\` agl\` ad mappings,
\[
S \in D([0, \infty); W^{-\ell,2}(\Omega)).
\]
We refer to \cite[Appendix A1]{FeiNovOpen} for basic properties of the space $D([0, \infty); W^{-\ell,2}(\Omega))$.

We conclude that the two-step selection process specified above yields a solution semigroup 
\[
t \mapsto \vU (t, [\vr_0, \vm_0, S_0, \mathcal{E}_0]) = \Big[ \mathcal{U}_{ES}(\vr_0, \vm_0, S_0, \mathcal{E}_0), \mathcal{E}_0 \Big]
\]
possessing all properties specified in \eqref{S2}--\eqref{S6}.

\subsection{Second Law of Thermodynamics - a single step selection}
\label{sss}

As we show below, the selection can be based on a single cost functional as long as the latter is strictly convex on its domain. 
Motivated by Clausius' statement quoted in Section \ref{se}, we propose a single step selection criterion based 
on minimization of a distance between a DMV solution and the background equilibrium state.

\subsubsection{Background equilibrium state}

It is a direct consequence of the equation of continuity \eqref{d17} that the total mass 
\begin{equation} \label{S14}
	\intO{ \vr(t, \cdot) } =  \intO{\vr_0 } = M_0 > 0 
\end{equation}
is conserved for any DMV solution. Given the total energy $\mathcal{E}_0$, the background (stable) 
equilibrium density-temperature is given by the constant state 
$(\Ov{\vr}, \Ov{\vt})$,   
\[
\Ov{\vr} = \avintO{ \vr_0 },\ \mathcal{E}_0 = c_v \intO{ 
	\Ov{\vr} \Ov{\vt} } \ \Rightarrow \ \Ov{\vt} = \frac{\mathcal{E}_0} {c_v M_0}.
\]

Next, let us introduce the Bregman distance (divergence) associated to the convex energy $E$, 
\begin{align}
	E &\left( \vr, \vm, S \Big| \tvr, \tvm, \tvS \right) \br
	&= E( \vr, \vm, S) - \frac{\partial E (\tvr, \tvm, \tvS) }{\partial \vr} (\vr - \tvr) -  \frac{\partial E (\tvr, \tvm, \tvS) }{\partial \vm} \cdot (\vm - \tvm) -  \frac{\partial E (\tvr, \tvm, \tvS) }{\partial S} (S - \tvS) \br &\quad - E(\tvr, \tvm, \tvS).
	\nonumber
\end{align}
We refer to \cite[Chapter 5]{FeLMMiSh} for a detailed discussion of the concept of Bregman divergence associated to the (convex) total energy $E$.
At this point, it is enough to observe that  $E \left( \vr, \vm, S \Big| \tvr, \tvm, \tvS \right)$ represents a ``metric'' in the sense that 
\[ 
E \left( \vr, \vm, S \Big| \tvr, \tvm, \tvS \right) \geq 0,\ E \left( \vr, \vm, S \Big| \tvr, \tvm, \tvS \right) = 0
\ \Leftrightarrow \  (\vr, \vm, S) = (\tvr, \tvm, \tvS).
\]

Evaluating the Bregman distance of a DMV solution $(\vr, \vm, S)$ to the equilibrium solution 
$(\Ov{\vr}, 0 , \Ov{\vr} s(\Ov{\vr}, \Ov{\vt}))$, and integrating over $\Omega$, we obtain 
\begin{equation} \label{S15}
	\intO{ E \left( \vr, \vm, S \Big| \Ov{\vr} , 0 , \Ov{\vr} s(\Ov{\vr}, \Ov{\vt}) \right) } = \intO{ \Big[  E(\vr, \vm, S) - \Ov{\vt} S ] } + \Ov{\vt} M_0 s(\Ov{\vr}, \Ov{\vt}) - \mathcal{E}_0,
\end{equation}	
cf. \cite[Chapter 5]{FeLMMiSh}. 
This motivates the choice of the cost functional 
\begin{equation} \label{S16}
	\mathcal{F}(\vr, \vm, S) = 
	\int_0^\infty \exp(-t) \left( \intO{ \Big[  E(\vr, \vm, S)(t, \cdot) - \Ov{\vt} S(t, \cdot)  \Big] } \right) \dt, 
	\Ov{\vt} = \frac{\mathcal{E}_0}{c_v M_0},
\end{equation}
cf. \cite{FeiLM2025II}.

Similarly to the preceding part, we say that $(\vr, \vm, S, \mathcal{E}_0)$ is \emph{admissible} DMV solution to the Euler system with the initial data 
\[
(\vr_0, \vm_0, S_0, \mathcal{E}_0) \in X_D  
\]
if
\begin{equation} \label{S17}
	(\vr, \vm, S) = {\rm arg min}_{\mathcal{U}[\vr_0, \vm_0, S_0, \mathcal{E}_0]} \left[ \int_0^\infty 
	\exp(-t) \left( \intO{ \Big[  E(\vr, \vm, S) - \Ov{\vt} S \Big] } \right)  \dt \right], 
\end{equation}	
meaning $(\vr, \vm, S) \in \mathcal{U}[\vr_0, \vm_0, S_0, \mathcal{E}_0]$ and 
\begin{align} 
	\int_0^\infty & \exp(-t) \left( \intO{ \Big[  E(\vr, \vm, S) - \Ov{\vt} S \Big] } \right) \dt \leq 
	\int_0^\infty & \exp(-t) \left( \intO{ \Big[  E(\tvr, \tvm, \tvS) - \Ov{\vt} \tvS \Big] } \right) \dt
	\label{S18}	
\end{align}	
for all $(\tvr, \tvm, \tvS) \in \mathcal{U}[\vr_0, \vm_0, S_0, \mathcal{E}_0]$.
As the function $(\vr, \vm ,S) \mapsto E(\vr, \vm, S) - \Ov{\vt} S$ is strictly convex on its domain, the admissible solution is identified in a single 
step.

Although the admissible DMV solution are (not known to be) $\prec_{\rm DiP}$ maximal, their energy defect vanishes as $t \to \infty$, 
see \cite[Proposition 3.2]{Fei2026I}.

\begin{Theorem}[{\bf Vanishing energy defect}] \label{TS1} Let $(\vr, \vm, S, \mathcal{E}_0)$ be an admissible 
	dissipative solution of the Euler system in $(0,\infty) \times \Omega$. 
	
	Then 	
	\[
	D_E(\vr, \vm, S, \mathcal{E}_0) (t \pm) \equiv \mathcal{E}_0 - \intO{ E \Big( (\vr, \vm, S)(t \pm, \cdot) \Big) } \to 0 \ \mbox{as}\ t \to \infty.
	\]
\end{Theorem}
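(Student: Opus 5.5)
The plan is to argue by contradiction, using the minimality \eqref{S18} together with the shift and concatenation axioms [A3], [A4] to build a competitor with strictly smaller cost. Suppose there are $\delta > 0$ and times $t_n \to \infty$ with $D_E(t_n\pm) \geq \delta$.

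\emph{Step 1: the shifted solution is still optimal.} For any $T > 0$, the time shift $\mathcal{S}_T[\vr, \vm, S]$ belongs to $\mathcal{U}[\vr(T), \vm(T), S(T-), \mathcal{E}_0]$ by [A3]. Mass, and hence $\Ov{\vt}$, is unchanged. If this shifted solution were not a minimizer for its own data, we could concatenate $(\vr, \vm, S)|_{[0,T]}$ with a better tail using [A4]. The cost splits as
\[
\int_0^T \exp(-t)\,(\dots)\,\dt + \exp(-T)\,\mathcal{F}\big(\mathcal{S}_T[\vr,\vm,S]\big),
\]
so this would contradict \eqref{S17}. Hence every shift of an admissible solution is admissible, and it suffices to exhibit, for each $T = t_n$ with defect at least $\delta$, a competitor tail whose cost is strictly lower.

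\emph{Step 2: heating the cold part of the gas.} Fix such a $T$ and set $(\vr, \vm, S) = (\vr, \vm, S)(T-)$. Since $\intO{ E } \leq \mathcal{E}_0 - \delta$ and the kinetic part is nonnegative, we get $c_v \intO{ \vr \vt } < c_v M_0 \Ov{\vt}$. So the set $\{\vt < \Ov{\vt}\}$ carries positive $\vr$-mass. We raise the entropy there, $S' = S + h$ with $h \geq 0$ supported in $\{\vt < \Ov{\vt} - \eta\}$, choosing $h$ so that $\intO{ E(\vr, \vm, S') } = \mathcal{E}_0$. This is possible by \eqref{d2} and monotonicity of $\vr e$ in $S$. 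Since $\partial_S E = \vt < \Ov{\vt}$ on the support of $h$, convexity gives
\[
\intO{ \big[ E(\vr,\vm,S') - \Ov{\vt} S' \big] } \leq \intO{ \big[ E(\vr,\vm,S) - \Ov{\vt} S \big] } - c(\delta) ,
\]
for some $c(\delta) > 0$ obtained by quantifying the slack $\Ov{\vt} - \vt$ on a set of $\vr$-mass bounded below in terms of $\delta$. An upward jump of entropy is admissible in \eqref{d19}, consistent with \eqref{d24}. Hence concatenation with a solution issuing from $(\vr, \vm, S', \mathcal{E}_0) \in X_D$ stays in the solution set; this is a variant of [A4] which I would check directly from Definition \ref{Dd1}.

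\emph{Step 3: comparing tails.} From the data $(\vr, \vm, S')$ we now have equality $\mathcal{E}_0 = \intO{ E(\vr, \vm, S') }$. Theorem \ref{TL2} then yields a DMV solution with defect at most $\delta' \ll c(\delta)$ for all later times. Along this solution, total entropy is nondecreasing. Moreover $\intO{ E } \in [\mathcal{E}_0 - \delta', \mathcal{E}_0]$ for all later times, while the original tail has $\intO{ E } \leq \mathcal{E}_0$ and, on account of the gap created in Step 2, strictly less entropy. The main obstacle is exactly here: converting the pointwise-in-time gain $c(\delta)$ at time $T$ into a strict gain of the exponentially weighted integral over $(T, \infty)$. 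The original tail may itself produce entropy later and close the gap. To handle this, I would use the optimality of the original tail to show first that $t \mapsto \intO{ [E - \Ov{\vt} S] }$ along an admissible solution is essentially nonincreasing and bounded below by the equilibrium value in \eqref{S15}. Hence it converges, and its decrements on $[t_n, t_n + 1]$ tend to zero. Applying Step 2 at $t_n$ large then gives a competitor whose cost is lower by roughly $c(\delta) - o(1)$ on a unit time interval, which contradicts the minimality from Step 1. Finally, the one-sided limits $t\pm$ are handled through \eqref{d23}.
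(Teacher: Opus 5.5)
Your Step 3 has a genuine gap. You propose to show that $t\mapsto\mathcal{G}(t)=\intO{[E-\Ov{\vt}S](t,\cdot)}$ is essentially nonincreasing along an admissible solution, but minimality does not give this. Comparing the optimal shifted solution with a heated competitor yields only the averaged inequality
\[
\int_0^\infty e^{-s}\,\mathcal{G}(\tau+s)\,{\rm d}s \le \mathcal{G}(\tau\pm)-G\big(D_E(\tau\pm)\big),
\]
which controls a weighted future average and does not force $\mathcal{G}$ itself to be monotone. Worse, the step is circular. Since $\mathcal{G}(t)=\mathcal{E}_0-D_E(t)-\Ov{\vt}\intO{S(t,\cdot)}$ and the total entropy is nondecreasing and bounded, hence convergent, ``$\mathcal{G}$ converges'' is equivalent to ``$D_E$ converges'', which is essentially the statement you are trying to prove.

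You have also misplaced the obstacle. Later entropy production on the original tail is $o(1)$ precisely because $\intO{S}$ converges. The real danger is that the original tail carries a large energy defect later, which lowers its cost. On the competitor side, Theorem \ref{TL2} is unnecessary: what you need is the upper bound $\intO{E}\le\mathcal{E}_0$ together with monotone entropy. These give, for any DMV solution issuing from zero-defect data at time $\tau$, $\mathcal{G}(t)\le\mathcal{E}_0-\Ov{\vt}\intO{S(t,\cdot)}\le\mathcal{G}(\tau)$ for all $t\ge\tau$. A defect on the competitor only helps it.

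Step 2 also uses convexity in the wrong direction. The fact that $\partial_S E=\vt<\Ov{\vt}$ at the \emph{original} state only gives the lower bound $E(S+h)-E(S)\ge\vt h$. You would need the \emph{heated} temperature below $\Ov{\vt}$, and this need not hold when all of $D_E$ is put on $\{\vt<\Ov{\vt}-\eta\}$. The uniform heating $\widehat{\vt}=(1+\ep)\vt$ in \eqref{s10}--\eqref{s20} gives the clean gain $G(D_E)$, with $G$ from \eqref{s18}.

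The missing idea is a $\limsup$ argument. Set $\delta^*=\limsup_{t\to\infty}D_E(t-)$, and note that $D_E(t-)\ge D_E(t+)$ because $S(t-)\le S(t+)$. Heat $S(\tau-)$ as in \eqref{s10}--\eqref{s20} and concatenate at $\tau$. The two solutions coincide on $[0,\tau)$, so minimality compares only the tails. The original tail has cost at least $\mathcal{E}_0-{\rm ess\,sup}_{t\ge\tau}D_E(t)-\Ov{\vt}\lim_{t\to\infty}\intO{S(t,\cdot)}$, and the competitor has cost at most $\mathcal{G}(\tau-)-G(D_E(\tau-))$. Together these give
\[
D_E(\tau-)+G\big(D_E(\tau-)\big)\le {\rm ess\,sup}_{t\ge\tau}D_E(t)+\Ov{\vt}\Big(\lim_{t\to\infty}\intO{S(t,\cdot)}-\intO{S(\tau-,\cdot)}\Big).
\]
Let $\tau\to\infty$ along a sequence with $D_E(\tau-)\to\delta^*$; note $G(y)\to\infty$ as $y\to\mathcal{E}_0$. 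This gives $G(\delta^*)\le 0$, hence $\delta^*=0$ by \eqref{s19}. Because $0\le D_E(t+)\le D_E(t-)$, this also covers the limit $t+$.
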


In view of Theorem \ref{TS1}, it is easy to check that the solution semigroup formed by admissible solutions enjoys the properties 
\eqref{S2}--\eqref{S6}.

\subsubsection{Absolute entropy maximizers?}

The reader will have noticed that we have deliberately fixed $\lambda_n = 1$ in the cost functional \eqref{S7}. 
There is a legitimate question to ask what would be the result of the minimization process if we consider
\begin{equation} \label{S19}
\mathcal{F}_\lambda (\vr, \vm, S) = 
\int_0^\infty \exp(- \lambda t) \left( \intO{ \Big[  E(\vr, \vm, S)(t, \cdot) - \Ov{\vt} S(t, \cdot)  \Big] } \right) \dt,\ \lambda > 0,
\end{equation}
instead of 
\[
	\mathcal{F} (\vr, \vm, S) = 
	\int_0^\infty \exp(- t) \left( \intO{ \Big[  E(\vr, \vm, S)(t, \cdot) - \Ov{\vt} S(t, \cdot)  \Big] } \right) \dt.
\]

Apparently, minimizing \eqref{S19} would give rise to a solution semigroup sharing the properties \eqref{S2}--\eqref{S6}.
In addition, we claim the following result concerning possible uniqueness of the selected solution. 

\begin{Theorem}[\bf Absolute entropy maximizer] \label{TS2}
Suppose there exists 
\[
(\tvr, \tvm, \tvS, \mathcal{E}_0) \in \mathcal{U}[\vr_0, \vm_0, S_0, \mathcal{E}_0]
\]
such that 
\begin{equation} \label{S20}
(\tvr, \tvm, \tvS) = {\rm arg min}_{\mathcal{U}[\vr_0, \vm_0, S_0, \mathcal{E}_0]} \left[ \int_0^\infty 
\exp(- \lambda t) \left( \intO{ \Big[  E(\vr, \vm, S) - \Ov{\vt} S \Big] } \right)  \dt \right]
\end{equation}
for all $\lambda \geq \Ov{\lambda} > 0$. 

Then 
\[
D_E(\tvr, \tvm, \tvS, \mathcal{E}_0)(t+, \cdot)(t+ ) = \mathcal{E}_0 - \intO{ E \Big( (\tvr, \tvm, \tvS)(t+, \cdot) \Big) } = 0
\]
for all $t > 0$. In particular, $(\tvr, \tvm, \tvS, \mathcal{E}_0)$ is a weak solution of the Euler system in the sense specified 
in \eqref{d29}--\eqref{d32}.

\end{Theorem}

\begin{proof}

\noindent {\bf Step 1: Positive energy defect and negative jumps of the cost functional}

Let us set 
\[
\mathcal{G}(\vr, \vm, S) = \intO{ \Big[  E(\vr, \vm, S) - \Ov{\vt} S \Big] }.
\]
Consider a DMV solution $(\vr, \vm, S, \mathcal{E}_0)$. We recall 
We recall that dissipative solutions
always satisfy
\begin{equation} \label{S21}
	\intO{ E \Big( (\vr, \vm, S)(t\pm, \cdot) \Big) } \leq \mathcal{E}_0,
	\ (\vr, \vm, S)(0-, \cdot) \equiv (\vr_0, \vm_0, S_0),
\end{equation}		
\begin{equation} \label{S22}
	S(t \pm, \cdot) \geq \underline{s} \vr.
\end{equation}	
Moreover, 
it follows from \eqref{S21} and \eqref{S22} that 
\begin{align} 
	\vr(t,x) &= 0 \ \Rightarrow \ \vm(t,x) = 0 \ \mbox{for a.a.}\ x \in \Omega, \br 
	\vr(t,x) &= 0 \ \Rightarrow \ S(t\pm,x) = 0 \ \mbox{for a.a.}\ x \in \Omega
	\label{s6}
\end{align}

Suppose that
\begin{equation} \label{s9} 
	D_E(\tau+) > 0 \ \mbox{for some}\ \tau > 0.
\end{equation}
Our aim is to show that the solution $(\vr, \vm, S, \mathcal{E}_0)$ can be extended beyond 
the time $\tau$ in such a way that the functional $\mathcal{G}$ evaluated at $\tau$ experiences a negative jump proportional 
to $D_E(\tau+)$.

First, by virtue of \eqref{s6}, we evaluate the temperature 
$\vt(\tau+, \cdot)$ using the relation 
\[
S(\tau+, \cdot) = \mathds{1}_{\vr(\tau, \cdot) > 0} \vr(\tau, \cdot) \Big( c_v \log(\vt(\tau+, \cdot)) - \log(\vr(\tau, \cdot)) \Big).
\] 
	
Next, we introduce 
\[
\widehat \vt (\tau, \cdot) = (1 + \ep) \vt(\tau+, \cdot) \ \mbox{whenever}\ \vr(\tau, \cdot) > 0  
\]
where $\ep > 0$ will be fixed below. 	

Let us denote
\[
E_{\rm int}(\tau+, \cdot) = c_v \vr(\tau, \cdot) \vt(\tau+, \cdot). 
\]
the associated internal energy.
We have
\begin{equation} \label{s10}
	\widehat{ {E}}_{\rm int}(\tau, \cdot) \equiv c_v \vr(\tau, \cdot) \widehat \vt (\tau, \cdot) = E_{\rm int}(\tau+, \cdot) + \ep c_v \vr(\tau, \cdot) \vt(\tau+, \cdot), 
\end{equation}
and
\begin{equation} \label{s11}
	\widehat{S}(\tau, \cdot) \equiv c_v \vr(\tau, \cdot) \log( \widehat \vt (\tau, \cdot) ) - \vr(\tau, \cdot) \log( \vr (\tau, \cdot) ) 
	= S(\tau+, \cdot)  + c_v \vr(\tau, \cdot) \log(1 + \ep).
\end{equation}

Thus we fix $\ep > 0$ so that 
\begin{equation} \label{s12}
	\widehat{D}_E(\tau) = 
	\mathcal{E}_0 - \intO{ E \Big( (\vr, \vm, \widehat{S})(\tau, \cdot) \Big) } = 0, 
\end{equation}		
meaning 
\begin{equation} \label{s13}
	\ep c_v \intO{ \vr(\tau, \cdot) \vt(\tau+, \cdot) } = D_E(\tau+)
	\ \Rightarrow \ 
	\ep = \frac{D_E(\tau+)}{\intO{ E_{\rm int}(\tau+, \cdot)}}. 
\end{equation} 

The relations \eqref{s12}, \eqref{s13} imply that we may continue the original solution $(\vr, \vm, S, \mathcal{E}_0)$ by concatenating it at the time $\tau$ with 
a new solution $(\widehat \vr, \widehat \vm, \widehat S, \mathcal{E}_0)$ starting from the data  
\[
(\vr(\tau, \cdot), \vm(\tau, \cdot), \widehat S (\tau, \cdot), \mathcal{E}_0), 
\]
with the energy defect $\widehat{{D}}_E(\tau+ ) = 0$.

Next, we evaluate the functional $\mathcal{G}$ at the time $\tau$ for the concatenated solution:
\begin{align} 
	\mathcal{G} \Big( (\vr, \vm, \widehat {S}) (\tau+, \cdot) \Big) &=
	\mathcal{G} \Big( (\vr, \vm, {S}) (\tau+, \cdot) \Big) \br  
	&+ \ep \intO{ E_{\rm int}(\tau+, \cdot) } - 
	\Ov{\vt} c_v \intO{ \vr(\tau, \cdot) \log(1 + \ep) }
	\label{s14}
\end{align}
In addition, using \eqref{S16}, \eqref{s13} we get
\begin{align}
	\ep \intO{ E_{\rm int}(\tau+, \cdot) } &- 
	\Ov{\vt} c_v \intO{ \vr(\tau, \cdot) \log(1 + \ep) } \br 
	&= d_E(\tau) - \mathcal{E}_0 \log \left( 1 + \frac{D_E(\tau+)}{\intO{ E_{\rm int}(\tau+, \cdot)}} \right).
	\label{s15}
\end{align}	

Finally, observe that 
\[
\intO{ E_{\rm int}(\tau+, \cdot)} \leq 
\intO{ E\Big(\vr, \vm, S)(\tau+, \cdot) \Big) } = \mathcal{E}_0 - D_E(\tau+);  
\]
whence
\begin{equation} \label{s17}
	D_E(\tau+) - \mathcal{E}_0 \log \left( 1 + \frac{D_E(\tau+)}{\intO{ E_{\rm int}(\tau+, \cdot)}} \right)
	\leq d_E(\tau) - \mathcal{E}_0 \log \left( 1 + \frac{D_E(\tau+)}{\mathcal{E}_0 - D_E(\tau+) } \right).
\end{equation}
Introducing the function
\begin{equation} \label{s18}
	G: y \mapsto \mathcal{E}_0 \log \left( 1 + \frac{y}{\mathcal{E}_0 - y } \right) - y
	= \mathcal{E}_0 \log \left( \frac{\mathcal{E}_0 }{\mathcal{E}_0 - y} \right) - y = 
	\mathcal{E}_0 \log( \mathcal{E}_0 ) - \mathcal{E}_0 
	\log(\mathcal{E}_0 - y) - y
\end{equation}
we observe that
\begin{equation} \label{s19}
	G(0) = 0,\ G'(y) = \frac{\mathcal{E}_0}{\mathcal{E}_0 - y}  - 1	> 0 \ \mbox{as long as}\ y > 0.
\end{equation}
Thus we may infer
\begin{equation} \label{s20} 
	\widehat{{D}}_E   (\tau) = 0,\ 
	\mathcal{G} \Big( (\vr, \vm, \widehat{S}) (\tau, \cdot) \Big)
	\leq \mathcal{G} \Big( (\vr, \vm, {S}) (\tau+, \cdot) \Big) -
	G(D_E(\tau+))
\end{equation}
for the concatenated solution, 
where $G$ is defined in \eqref{s18}. We conclude that the functional $\mathcal{G}$ evaluated at the concatenated solution 
experiences a negative jump proportional to $D_E(\tau+)$. 

\bigskip

\noindent
{\bf Step 2: Application of Lerch's argument}

Suppose that for some $t > 0$ the energy defect $D_E(\tau+)$ of the minimizer $(\tvr, \tvm, \tvS)$ 
satisfies $D_E(\tau+) > 0$. Consider the concatenated solution constructed in Step 1,  
\[
(\vr, \vm, S) (t, \cdot) = \left\{ \begin{array}{l} (\tvr, \tvm, \tvS)(t, \cdot) \ \mbox{for}\ t < \tau, \\ \\
(\widehat \vr, \widehat	\vt, \widehat S)(t, \cdot) \ \mbox{for}\ t \geq \tau. \end{array} \right. 
\]  
We claim that 
\[
\left[ \int_0^\infty \exp(- \lambda t) \left( \intO{ \Big[  E(\vr, \vm, S) - \Ov{\vt} S \Big] } \right)  \dt \right]
< \left[ \int_0^\infty \exp(- \lambda t) \left( \intO{ \Big[  E(\tvr, \tvm, \tvS) - \Ov{\vt} \tvS \Big] } \right)  \dt \right]
\]
for all $\lambda > 0$ large enough in contrast with \eqref{S20}. Indeed, by virtue of \eqref{s20}, there exist $D > 0$, $\delta > 0$ such that 
\[
\left( \intO{ \Big[  E(\vr, \vm, S)(t+, \cdot) - \Ov{\vt} S(t+, \cdot) \Big] } \right) 
\leq \left( \intO{ \Big[  E(\tvr, \tvm,\tvS )(t+, \cdot) - \Ov{\vt} \tvS(t+, \cdot) \Big] } \right) - D 
\] 
for all $t \in [\tau, \tau + \delta]$. Consequently, 
\begin{align}
&\left[ \int_0^\infty \exp(- \lambda t) \left( \intO{ \Big[  E(\vr, \vm, S) - \Ov{\vt} S \Big] } \right)  \dt \right] \br
&\quad - \left[ \int_0^\infty \exp(- \lambda t) \left( \intO{ \Big[  E(\tvr, \tvm, \tvS) - \Ov{\vt} \tvS \Big] } \right)  \dt \right] \br 
&\quad \leq - D \int_{\tau}^{\tau + \delta} \exp(-\lambda t) \dt + C(\mathcal{E}_0) \int_{\tau + \delta}^\infty \exp(-\lambda t) \dt \br 
&\quad \leq - D \delta \exp(-\lambda (\tau + \delta)) + \frac{C(\mathcal{E}_0)}{\lambda} \exp(-\lambda (\tau + \delta)) < 0
\nonumber	
\end{align}	
for $\lambda$ large enough.
\end{proof}	

The (hypothetical) solution $(\tvr, \tvm, \tvS, \mathcal{E}_0)$ satisfying the hypothesis of Theorem \ref{TS2} can be seen as \emph{absolute entropy maximiser}. 

\bigskip 
\hrule
\bigskip
\noindent
{\bf Open problem V:} 

\noindent
{\it Characterize the class of initial data for which the solution set $\mathcal{U}[\vr_0, \vm_0, S_0, \mathcal{E}_0]$ contains an absolute entropy 
	maximiser.
}
\bigskip 
\hrule
\bigskip

\subsection{Solution semigroup - conclusion}
\label{C}

The above discussion revealed two possible rather different approaches to the problem of well posedness for the Euler system. On the one hand, 
the Euler system can be seen as an inertial range of \emph{turbulence}. Admissible solutions are identified with limits of consistent approximations exhibiting 
oscillatory behaviour and/or possible concentrations. They are ``truly'' measure valued, whereas their energy exhibits {\it priori} undetermined turbulent component. 
On the other hand, the entropy maximization in the spirit of Dafermos or DiPerna seems to prefer the standard weak solutions. The question 
which approach yields a ``correct'' admissible solutions should be possibly decided by experiments rather than by mathematical arguments.

\subsubsection{Turbulent scenarios}

In the context of turbulent DMV solutions, the open question remains what is the origin of the underlying mechanism creating oscillations and/or concentrations 
in the sequence of consistent approximation. One possible scenario, supported by the numerical analysis performed by the group around Fjordholm, Mishra, Tadmor et al.
\cite{FjKaMiTa}, \cite{FjLyMiWe}, \cite{FjMiTa1} (see also \cite{FeLMMiSh}), is a generation of the associated Yong measure directly by oscillations in the 
approximate sequence. An alternative, suggested again by numerical experiments performed by Elling \cite{ELLI2}, \cite{ELLI3}, \cite{ELLI1}, is a strong 
(non--oscillatory) convergence of consistent approximations to a \emph{connected set} of solutions of the Euler system. Apparently, the second scenario 
is in line with the existence of infinitely many solutions revealed recently by the method of convex integration.

\subsubsection{Reducing the set of eligible DMV solutions, computable solutions}

It is clear that \emph{convexity} of the set of all DMV solutions emanating from given initial data plays a crucial role in the selection process.  
To reduce the solution set, it would be desirable to focus only on \emph{computable} ones, meaning those that can be identified as limits 
of consistent approximations. 

\bigskip
\hrule
\bigskip
\noindent
{\bf Open problem VI:} 

\noindent
{\it Let the initial data $(\vr_0, \vm_0, S_0, \mathcal{E}_0)$ be given. 
Is the set of (weak) limits of consistent approximations convex? Does the set $\mathcal{U}[\vr_0, \vm_0, S_0, \mathcal{E}_0]$ of all DMV solutions coincide 
with the set of all limits of consistent approximations?
}
\bigskip 
\hrule
\bigskip

\noindent
An (affirmative) answer to the above questions in the context of the incompressible Euler system was given by Sz{\'e}kelyhidi, and Wiedemann 
\cite{SzeWie}.

Another possibility of reducing the set of relevant DMV solutions would be to take into account only \emph{statistical limits} of consistent 
approximations. More specifically, we would consider strong limits of Ces\` aro (empirical) averages 
\begin{align}
\vr &= \lim_{N \to \infty} \frac{1}{N} \sum_{n=1}^N \vr_n, \br
\vm &= \lim_{N \to \infty} \frac{1}{N} \sum_{n=1}^N \vm_n, \br
S &= \lim_{N \to \infty} \frac{1}{N} \sum_{n=1}^N S_n
\label{C1}
\end{align}	
of sequences of consistent approximation $(\vr_n, \vm_n, S_n)_{n=1}^\infty$. In accordance with the celebrated version 
of the Banach-Saks theorem by Koml\'{o}s \cite{Kom}, any sequence $(v_n)_{n=1}^\infty$ bounded in $L^1((0,T) \times \Omega)$ 
admits a subsequence $(v_{n_k})_{k=1}^\infty$ such that there holds 
\begin{equation} \label{C2}
\lim_{N \to \infty} \frac{1}{N} \sum_{k=1}^N v_{n_k} \to v \ \mbox{a.a. in}\ (0,T) \times \Omega.
\end{equation}	
As a matter of fact, the convergence \eqref{C2} remains valid also for any subsequence of $(v_{n_k})_{k=1}^\infty$. The limit in 
\eqref{C2} coincides with the so-called biting limit on the sequence $(v_{n_k})_{k=1}^\infty$.
 
The theory of parametrized (Young) measures based on Koml\' os theorem was elaborated by Balder  \cite{Bald}, \cite{BALDER}.
Converting weak convergence to strong has been applied in the numerical analysis of the specific form of Young measures in \cite{FeiLMMiz}.

Inspired by \eqref{C1}, we may introduce a new class of \emph{computable solutions} $\mathcal{U}_{\rm comp}[\vr_0, \vm_0, S_0, \mathcal{E}_0]$, 
\begin{align} 
(\vr, \vm, S, \mathcal{E}_0) &\in \mathcal{U}_{\rm comp}[\vr_0, \vm_0, S_0, \mathcal{E}_0] \br &\Leftrightarrow \br 
\vr = \lim_{N \to \infty} \frac{1}{N} \sum_{n=1}^N \vr_n, \ \vm &= \lim_{N \to \infty} \frac{1}{N} \sum_{n=1}^N \vm_n, \
S = \lim_{N \to \infty} \frac{1}{N} \sum_{n=1}^N S_n \ \mbox{a.a. in}\ (0, \infty) \times \Omega, \br 
\ \mbox{for some sequence}\ &(\vr_n, \vm_n, S_n, \mathcal{E}_0) \ \mbox{of consistent approximations of the Euler system.} 
\label{C3} 
\end{align}

It is not difficult to show the following properties of the set $\mathcal{U}_{\rm comp}[\vr_0, \vm_0, S_0, \mathcal{E}_0]$: 
\begin{itemize}
\item {\bf convexity:}
for any initial data $(\vr_0, \vm_0, S_0, \mathcal{E}_0)$, the set of computable solutions \\
$\mathcal{U}_{\rm comp}[\vr_0, \vm_0, S_0, \mathcal{E}_0]$ is a non-empty closed convex subset of $\mathcal{U}[\vr_0, \vm_0, S_0, \mathcal{E}_0]$;
\item {\bf compatibility} 
any weak solution of the Euler system is computable.
\end{itemize}	
In view of convexity of the set $\mathcal{U}_{\rm comp}[\vr_0, \vm_0, S_0, \mathcal{E}_0]$, the selection procedures specified in Sections 
\ref{tss}, \ref{sss} may be directly applied to the set of computable solutions yielding a solution semigroup. Our last open problem reads:

\bigskip
\hrule
\bigskip
\noindent
{\bf Open problem VII:} 

\noindent
{\it We have 
\[
\mathcal{U}_{\rm comp}[\vr_0, \vm_0, S_0, \mathcal{E}_0] \subset \mathcal{U}[\vr_0, \vm_0, S_0, \mathcal{E}_0]. 
\]	
Is it true that 
\[
\mathcal{U}_{\rm comp}[\vr_0, \vm_0, S_0, \mathcal{E}_0] = \mathcal{U}[\vr_0, \vm_0, S_0, \mathcal{E}_0]
\]	
at least for a specific class of initial data?
}
\bigskip 
\hrule
\bigskip

Needless to say that considering empirical averages of numerical approximations is strongly reminiscent of the Monte Carlo method, where, however, the average is taken 
over the set of \emph{statistical solutions} emerging from i.i.d. initial data. One could therefore conjecture that non--uniqueness
of the limits generated by consistent (numerical) approximations could be created by certain ``random'' errors during the computation process.

\def\cprime{$'$} \def\ocirc#1{\ifmmode\setbox0=\hbox{$#1$}\dimen0=\ht0
	\advance\dimen0 by1pt\rlap{\hbox to\wd0{\hss\raise\dimen0
			\hbox{\hskip.2em$\scriptscriptstyle\circ$}\hss}}#1\else {\accent"17 #1}\fi}


\end{document}